\documentclass[onefignum,onetabnum]{siamart250211}
\usepackage{braket,amsfonts,amsmath}
\usepackage{booktabs} 
\usepackage[caption=false]{subfig}
\usepackage{tikz-cd}
\usepackage{algorithm}
\usepackage{algpseudocode}
\usepackage{xspace}
\usepackage[mathscr]{euscript}
\usepackage{bold-extra}
\usepackage[most]{tcolorbox}
\usepackage{multirow}
\usepackage{graphicx}
\usepackage{graphics}
\usepackage{float}
\graphicspath{{figures/}{./}}
\usepackage{hyperref}
\usepackage{cleveref}

\newsiamthm{claim}{Claim}
\newsiamremark{remark}{Remark}
\newsiamremark{hypothesis}{Hypothesis}
\crefname{hypothesis}{Hypothesis}{Hypotheses}

\title{Transform-Based Multilinear Algebra via \\ Tensor Decomposition\thanks{Submitted to the editors \today.
\funding{Funding information goes here.}}}

\author{
Yidan Mei\thanks{Department of Statistics and Data Science, Yale University, New Haven, CT 06511, USA (\email{yidan.mei@yale.edu}).}
\and Shenghan Mei\thanks{Department of Mathematics, University of North Carolina at Chapel Hill, Chapel Hill, NC 27599, USA (\email{shmei@unc.edu}).}
\and Ziqin He\thanks{Department of Mathematics, University of North Carolina at Chapel Hill, Chapel Hill, NC 27599, USA (\email{zhe21@unc.edu}).}
\and Can Chen\thanks{School of Data and Information Sciences, Department of Mathematics, and Department of Biostatistics, University of North Carolina at Chapel Hill, Chapel Hill, NC 27599, USA (\email{canc@unc.edu}).}}

\begin{document}
\maketitle

\begin{abstract}
Transform-based tensor products, including the T-product and its more general form, namely the higher-order tensor-tensor product, have become fundamental tools for multilinear data analysis in applications such as image processing, signal reconstruction, and robotics. While invertible transforms enable tensor computations to be carried out via matrix operations in the transform domain, the resulting storage and computational costs remain prohibitive for high-dimensional, higher-order tensors. To address this challenge, we integrate low-rank tensor decomposition techniques, specifically tensor train decomposition (TTD) and hierarchical Tucker decomposition (HTD), into transform-based multilinear algebra to improve computational and memory efficiency. In particular, we develop TTD- and HTD-based formulations for the T-product and its associated key algebra, such as block diagonalization and tensor singular value decomposition, by operating directly on the factor matrices or tensors of the decompositions. The framework is further generalized to the higher-order tensor-tensor product and applied to multilinear model order reduction problems. We demonstrate the effectiveness and efficiency of our framework with numerical examples. 
\end{abstract}
	
\begin{keywords}
transform-based multilinear algebra, low-rank tensor decomposition, tensor train decomposition, hierarchical Tucker decomposition, tensor singular value decomposition
\end{keywords}
	
\begin{MSCcodes}
15A18, 15A23, 15A69, 65F55
\end{MSCcodes}

\section{Introduction}
Multidimensional data have become ubiquitous in modern scientific and engineering applications, including image and video processing \cite{ahmadi2023fast,woods2011multidimensional,xia2024tensor}, biomedical measurement analysis \cite{azam2022review,sedighin2024tensor}, robotics \cite{shetty2024tensor,wijesoma2006toward}, and networked dynamical systems \cite{chen2022explicit,chen2021controllability,shen2017tensor}. In these applications, the data are inherently multiway, with spatial, temporal, spectral, or modal relationships encoded across multiple indices. For example, video sequences contain coupled spatial and temporal correlations, while networked and biomedical systems often exhibit interactions across multiple functional or physical modes. Traditional approaches frequently reshape tensors into vectors or matrices through unfolding in order to apply classical linear algebra techniques. Although this may simplify computations, it often destroys intrinsic multilinear structure, weakens correlations across modes, and leads to redundant high-dimensional representations \cite{chen2021multilinear,ma2026online,rogers2013multilinear}. These limitations become increasingly pronounced as the size and order of the data grow. Multilinear algebra therefore provides a natural framework for modeling and processing such data while preserving their underlying multiway structure, enabling more compact representations and more effective exploitation of correlations across multiple dimensions simultaneously.

Among existing tensor frameworks, transform-based tensor algebra, particularly the T-product, provides an effective operator-based framework for third-order tensors \cite{braman2010third,cao2023some,kilmer2013third,kilmer2011factorization,lund2020tensor,miao2021t}. By combining block-circulant representations with invertible transforms, the T-product induces tensor analogues of many classical matrix operations, including inversion, eigendecomposition, and tensor singular value decomposition (T-SVD), while enabling these operations to be computed through decoupled matrix problems in the transform domain. Owing to its elegant algebraic structure and computational efficiency, the T-product framework has been successfully applied to a wide range of problems, including pose estimation and recognition \cite{hao2013facial,hoover2011pose}, tensor completion and robust principal component analysis \cite{he2022tensor,lu2019tensor,zhang2016exact}, multi-view learning \cite{wang2026towards, wu2019essential,yin2018multiview}, signal processing \cite{11430563,pena2023t,saibaba2025tensor}, tensor linear systems \cite{ma2022randomized}, and multilinear systems and control \cite{ding2025tensor,he2026data,hoover2021new,ma2025multilinear,mao2025data,mei2026data,mei2025controllability}. Motivated by the increasing prevalence of higher-order data, several generalizations of transform-based tensor algebra have been developed beyond the third-order setting. In particular, the higher-order tensor-tensor product framework generalizes the T-product to tensors of arbitrary order by applying separable transforms along multiple modes, thereby reducing tensor operations to collections of independent matrix computations in the transform domain while preserving the underlying multiway structure \cite{kernfeld2015tensor, martin2013order,ozdemir2022high,wang2022hot}.

Although the T-product and related transform-based tensor algebra have enabled a broad range of applications, they do not fully eliminate the underlying computational bottleneck. For large-scale problems, both storage and runtime can still become prohibitive because the computations rely on full tensor representations, block-circulant embeddings, and repeated dense matrix factorizations in the transform domain. These costs grow further as either the tensor order or individual mode dimensions increase, making scalability a critical concern. The issue is particularly pronounced in high-dimensional multilinear systems analysis and control, where tensor products, spectral computations, and decomposition-based routines must often be performed repeatedly within iterative algorithms \cite{he2026data,mao2025data,mei2026data}. Similar challenges also arise in high-dimensional imaging and scientific computing, where large mode sizes lead to substantial memory demands and computational overhead \cite{bachmayr2018parametric,bengua2017efficient,chang2020weighted}.

To address this challenge, it is natural to combine transform-based multilinear algebra with low-rank tensor decompositions. A substantial body of work has shown that tensor decompositions, including CANDECOMP/PARAFAC decomposition \cite{bader2008efficient,kolda2009tensor}, higher-order singular value decomposition \cite{5447070,de2000multilinear}, tensor train decomposition (TTD) \cite{oseledets2011tensor,oseledets2009breaking}, and hierarchical Tucker decomposition (HTD) \cite{grasedyck2010hierarchical}, can represent high-dimensional tensors in a compact form while preserving essential multilinear structure \cite{tokcan2026tensor}. In this work, we focus on TTD and HTD, which are particularly well suited for higher-order problems due to their numerically stable and storage-efficient representations and their ability to support key numerical operations, such as orthogonalization, truncation, and linear-system-related computations, directly on low-dimensional core tensors or hierarchical factors rather than on full tensors. Moreover, both formats have demonstrated strong performance in large-scale computational settings and have been successfully employed in dynamical low-rank approximation for time-dependent tensor problems \cite{lubich2013dynamical,mao2025tensor}. These characteristics make TTD and HTD particularly attractive for accelerating transform-based tensor operations in regimes where both dimensionality and computational cost are substantial.

The primary contribution of this work is a tensor decomposition-based computational framework for transform-based multilinear algebra. We reformulate the T-product and related operations directly within TTD and HTD formats, so that the dominant computations are performed on compressed core tensors or factors without explicitly forming or reconstructing full tensors. In particular, we derive TTD- and HTD-based formulations for block diagonalization, the T-product, and T-SVD, and provide a detailed computational complexity analysis of the resulting algorithms. We further extend these constructions from the third-order setting to the higher-order tensor-tensor product framework. Finally, we demonstrate the effectiveness and efficiency of the proposed approach through numerical examples and illustrate its applicability to model order reduction in multilinear dynamical systems. To support reproducibility and practical use, we also provide MATLAB codes implementing the main decomposition-based tensor operations developed in this work.

The remainder of this article is organized as follows. \Cref{sec:preliminaries} reviews tensor preliminaries, including the T-product, the higher-order tensor-tensor product, and the fundamentals of TTD and HTD. \Cref{sec:3rdoperation} develops TTD- and HTD-based formulations for third-order transform-based tensor algebra, including block diagonalization, the T-product, and T-SVD. \Cref{sec:higherorderoperation} extends these constructions to higher-order tensor-tensor products. \Cref{sec:numeg} presents numerical experiments illustrating the performance of the proposed methods and their application to multilinear  model order reduction. \Cref{sec:conclusion} concludes with future research directions.

\section{Preliminaries} \label{sec:preliminaries}
Tensors provide a natural representation for multidimensional data, extending vector and matrix algebra to higher orders \cite{chen2024tpds,kolda2009tensor}. The order of a tensor refers to the number of its dimensions, with each dimension referred to as a mode. A $k$th-order tensor is typically denoted by $\mathscr{T} \in \mathbb{R}^{n_1 \times n_2 \times \cdots \times n_k}$. Many matrix operations and decompositions can be extended to the tensor setting in a consistent manner. For convenience, we adopt MATLAB colon notation “:” to denote all entries along a given tensor dimension. For example, the frontal slices, which are matrices obtained by fixing all indices except the first two, of a third-order tensor $\mathscr{T}\in\mathbb{R}^{n_1\times n_2\times n_3}$ can be written as $\mathscr{T}(:,:,j_3)$ for $j_3=1,2,\dots,n_3$.

\subsection{T-product} The T-product extends matrix multiplication to third-order tensors by enabling tensor multiplication through circular convolution operations \cite{braman2010third,kilmer2013third,kilmer2011factorization}. A key component of this framework is the use of block-circulant operators, which represent a tensor as a structured block matrix whose multiplication can be performed using standard matrix operations. Complementing this construction, the unfolding operator reshapes a tensor into a block column matrix,  establishing a bridge between tensor representations and classical matrix multiplication. Given a third-order tensor $\mathscr{T}\in\mathbb{R}^{n_1\times n_2\times n_3}$, the two operators are defined as 
\begin{align*}
\texttt{bcirc}(\mathscr{T}) &= 
\begin{bmatrix}
\mathscr{T}(:,:,1) & \mathscr{T}(:,:,n_3) & \cdots & \mathscr{T}(:,:,2)\\
\mathscr{T}(:,:,2) & \mathscr{T}(:,:,1) & \cdots & \mathscr{T}(:,:,3)\\
\vdots & \vdots & \ddots & \vdots\\
\mathscr{T}(:,:,n_3) & \mathscr{T}(:,:,n_3-1) & \cdots & \mathscr{T}(:,:,1)
\end{bmatrix}\in\mathbb{R}^{n_1n_3\times n_2n_3}, \\
\texttt{unfold}(\mathscr{T}) &= 
\begin{bmatrix}
\mathscr{T}(:,:,1)^\top &
\mathscr{T}(:,:,2)^\top &
\cdots &
\mathscr{T}(:,:,n_3)^\top
\end{bmatrix}^\top\in\mathbb{R}^{n_1n_3\times n_2}.
\end{align*}
The reverse operation $\texttt{fold}(\cdot)$ satisfies $\texttt{fold}(\texttt{unfold}(\mathscr{T}))=\mathscr{T}$.  
With these operators, the third-order T-product is defined as follows. 

\begin{definition} [T-product] \label{def:3rdtprod}
The T-product between two third-order tensors $\mathscr{T}\in\mathbb{R}^{n_1\times l\times n_3}$ and $\mathscr{S}\in\mathbb{R}^{l\times n_2\times n_3}$, denoted by $\mathscr{T}\star\mathscr{S}$, is defined as 
\begin{equation*}
\mathscr{T}\star\mathscr{S} = \texttt{fold}\big(\texttt{bcirc}(\mathscr{T})\texttt{unfold}(\mathscr{S})\big) \in\mathbb{R}^{n_1\times n_2\times n_3}.
\end{equation*}
\end{definition}

The T-product naturally induces tensor analogues of standard matrix concepts. A tensor $\mathscr{I}\in \mathbb{R}^{n\times n \times n_3}$ is said to be the T-identity tensor if its first frontal slice $\mathscr{I}(:,:,1)$ is an identity matrix, and the following slices $\mathscr{I}(:,:,j_3)$,  $j_3 = 2,3,\dots, n_3$ are all zeros.  The T-transpose of a tensor $\mathscr{T} \in \mathbb{R}^{n_1 \times n_2 \times n_3}$, denoted by $\mathscr{T}^\top$, is defined as transposing each frontal slice $\mathscr{T}(:,:,j_3)$ and then reversing the order of the transposed slices from 2 to $n_3$. The T-inverse of a tensor $\mathscr{T} \in \mathbb{R}^{n \times n \times n_3}$, denoted by $\mathscr{T}^{-1}$, is defined such that $\mathscr{T} \star \mathscr{T}^{-1} = \mathscr{T}^{-1} \star \mathscr{T} = \mathscr{I}$, where $\mathscr{I}$ is the identity tensor. The tensor $\mathscr{T} \in \mathbb{R}^{n \times n \times n_3}$ is said to be T-orthogonal if $\mathscr{T} \star \mathscr{T}^\top = \mathscr{T}^\top \star \mathscr{T} = \mathscr{I}$. A tensor $\mathscr{T} \in \mathbb{R}^{n \times n \times n_3}$ is called F-diagonal if each frontal slice $\mathscr{T}(:,:,j_3)$ is a diagonal matrix. Note that we use the same notations for matrix and T-product
operations (e.g., transpose, inverse, etc.)
whenever no ambiguity arises. More importantly,  the classical singular value decomposition (SVD) admits a natural extension to tensors within the T-product framework, known as  tensor singular value decomposition (T-SVD) \cite{zhang2018randomized,zhang2016exact}.

\begin{definition}[T-SVD]
The T-SVD of a tensor $\mathscr{T}\in\mathbb{R}^{n_1\times n_2\times n_3}$ is defined as
\begin{equation}
\mathscr{T}=\mathscr{U}\star\mathscr{S}\star\mathscr{V}^{\top},
\end{equation}
where $\mathscr{U}\in\mathbb{R}^{n_1\times n_1\times n_3}$ and $\mathscr{V}\in\mathbb{R}^{n_2\times n_2\times n_3}$ are T-orthogonal, and $\mathscr{S}\in\mathbb{R}^{n_1\times n_2\times n_3}$ is an F-rectangular diagonal tensor. The tubes $\mathscr{S}_{jj:}\in\mathbb{R}^{n_3}$ are referred to as the singular tubes of $\mathscr{T}$ for $j=1,2,\dots,\min{\{n_1,n_2\}}$.
\end{definition}

T-SVD can be computed  by exploiting the discrete Fourier transform together with the classical matrix SVD. In particular, a block-circulant matrix can be block-diagonalized through left and right multiplication by block Fourier matrices. For a third-order tensor $\mathscr{T}\in\mathbb{R}^{n_1\times n_2\times n_3}$, the Fourier transform of its block-circulant representation, denoted by $\mathcal{F}{\{\texttt{bcirc}(\mathscr{T})\}}$, is expressed as
\begin{equation}\label{eq:fourier_def}
\scalebox{0.98}{$\mathcal{F}\{\texttt{bcirc}(\mathscr{T})\} = (\textbf{F}_{n_3} \otimes \textbf{I}_{n_1}) \, \texttt{bcirc}(\mathscr{T}) \, (\textbf{F}_{n_3}^{*} \otimes \textbf{I}_{n_2})
= \texttt{blkdiag}(\textbf{T}_1,\textbf{T}_2, \dots, \textbf{T}_{n_3}),$}
\end{equation}
where $\texttt{blkdiag}(\cdot)$ denotes the MATLAB block diagonal operator, $\otimes$ represents the Kronecker product, $*$ denotes the conjugate transpose, 
$\textbf{I}_{n_1}\in\mathbb{R}^{n_1\times n_1}$ is the identity matrix, and $\textbf{F}_{n_3} \in \mathbb{C}^{n_3 \times n_3}$ is the discrete Fourier transform matrix defined as
\begin{equation*}
\textbf{F}_{n_3} = \frac{1}{\sqrt{n_3}} 
\begin{bmatrix}
1 & 1 & 1 & \cdots & 1 \\
1 & \omega & \omega^2 & \cdots & \omega^{-1} \\
\vdots & \vdots & \vdots & \ddots & \vdots \\
1 & \omega^{n_3-1} & \omega^{2(n_3-1)} & \cdots & \omega^{(n_3-1)^2}
\end{bmatrix},
\end{equation*}
with $\omega = \exp(-\frac{2\pi i}{n_3})$ ($i$ is the imaginary number here). Next, the matrix SVD of each diagonal block matrix is computed as $\textbf{T}_{j_3}=\textbf{U}_{j_3}\textbf{S}_{j_3}\textbf{V}_{j_3}^*$ for $j_3=1,2,\dots, n_3$. Finally, the factor tensors $\mathscr{U}$, $\mathscr{S}$, and $\mathscr{V}$ are obtained by applying the inverse discrete Fourier transform to the collections of matrices ${\textbf{U}_{j_3}}$, ${\textbf{S}_{j_3}}$, and ${\textbf{V}_{j_3}}$, respectively. It is worth noting that the T-SVD of $\mathscr{T}$ differs from the matrix SVD of $\texttt{bcirc}(\mathscr{T})$ since the block circulant matrix $\texttt{bcirc}(\mathscr{S})$ is not
diagonal.

\subsection{Higher-order tensor-tensor product} The higher-order tensor-tensor product, introduced in \cite{ozdemir2022high}, generalizes the T-product to tensors of arbitrary order,  enabling transform-based tensor operations to be performed in the higher-order setting. To define the higher-order tensor-tensor product, we first introduce two notions of tensor multiplication.

\begin{definition} [Tensor-matrix multiplication]
    The mode-$p$ tensor-matrix multiplication for a $k$th-order tensor $\mathscr{T}\in\mathbb{R}^{n_1\times n_2\times \cdots \times n_k}$ and a matrix $\textbf{A}\in\mathbb{R}^{m\times n_p}$, denoted by $\mathscr{T}\times_p\textbf{A}\in\mathbb{R}^{n_1\times n_2\times \cdots \times n_{p-1}\times m\times n_{p+1}\times\cdots\times n_k}$, is defined as
\begin{equation*}
\scalebox{0.95}{$\displaystyle(\mathscr{T}\times_p \textbf{A}) (j_1,j_2,\dots, j_{p-1},i,j_{p+1},\dots, j_k) =
\sum_{j_p=1}^{n_p}
\textbf{A}(i, j_p)
\mathscr{T}(j_1,j_2,\dots, j_{p-1}, j_p, j_{p+1},\dots, j_k).$}
\end{equation*}
\end{definition}

\begin{definition} [Facewise product]
For $k$th-order tensors
$\mathscr{T}\in\mathbb{R}^{n_1\times \ell\times n_3\times \cdots\times n_k}$ and $\mathscr{S}\in\mathbb{R}^{\ell\times n_2\times n_3\times \cdots\times n_k}$, the facewise product between the two tensors, denoted by $\mathscr{T} \ \Delta \ \mathscr{S}\in\mathbb{R}^{n_1\times n_2\times n_3\times n_4\times \cdots\times n_k}$, is defined as
\begin{equation*}
({\mathscr{T}}\ \Delta\ {\mathscr{S}})(:,:,j_3, j_4,\dots, j_k)
=
{\mathscr{T}}(:,:,j_3,j_4, \dots, j_k)\,
{\mathscr{S}}(:,:,j_3,j_4, \dots, j_k),
\end{equation*}
for every index tuple $j_3,j_4,\dots,j_k$.
\end{definition}

\begin{definition} [Higher-order tensor-tensor product]
For two $k$th-order tensors
$\mathscr{T}\in\mathbb{R}^{n_1\times \ell\times n_3\times \cdots\times n_k}$ and $\mathscr{S}\in\mathbb{R}^{\ell\times n_2\times n_3\times \cdots\times n_k}$,
the higher-order tensor-tensor product between the two tensors, denoted by $\mathscr{T}\star_H \mathscr{S}\in\mathbb{R}^{n_1\times n_2\times n_3\times n_4\times \cdots\times n_k}$, is defined  as
\begin{equation}
\label{eq:kthorderTprod}
\mathscr{T}\star_H \mathscr{S}
=
\bigl(\widetilde{\mathscr{T}}\ \Delta\ \widetilde{\mathscr{S}}\bigr)
\times_3 \textbf{F}_{n_3}^{-1}
\times_4 \textbf{F}_{n_4}^{-1}
\times_5\cdots
\times_k \textbf{F}_{n_k}^{-1},
\end{equation}
where $\widetilde{\mathscr{T}}
=
\mathscr{T}
\times_3 \textbf{F}_{n_3}
\times_4 \textbf{F}_{n_4}
\times_5\cdots
\times_k \textbf{F}_{n_k}$ and $
\widetilde{\mathscr{S}}
=
\mathscr{S}
\times_3 \textbf{F}_{n_3}
\times_4 \textbf{F}_{n_4}
\times_5\cdots
\times_k \textbf{F}_{n_k}$ are obtained by applying the discrete Fourier transform matrices along modes $3$ through $k$ via tensor-matrix multiplication.
\end{definition}


Applying the discrete Fourier transform along modes $3$ through $k$ decouples the
computation into $n_3n_4\cdots n_k$ independent matrix computations in the Fourier
domain, thereby extending T-product-based algebraic structures such as T-SVD to
$k$th-order tensors. More generally, any invertible linear transform can replace the
discrete Fourier transform, yielding a broader class of transform-based tensor-tensor products.

\subsection{Tensor decomposition} Tensor decomposition is a fundamental technique for representing high-dimensional tensors using lower dimensional components \cite{kolda2009tensor}. Among the many tensor decomposition formats, this work focuses on tensor train decomposition (TTD) \cite{oseledets2011tensor} and hierarchical Tucker decomposition (HTD) \cite{grasedyck2010hierarchical} because of their favorable numerical stability and strong compression capability.

\begin{definition} [TTD]
    Let $\mathscr{T}\in\mathbb{R}^{n_1\times n_2\times\cdots\times n_k}$ be a $k$th-order tensor. The TTD of $\mathscr{T}$ is defined as
\begin{equation*}
\mathscr{T}(j_1,j_2,\dots,j_k)
=
\sum_{\alpha_0=1}^{r_0}\sum_{\alpha_1=1}^{r_1}\cdots \sum_{\alpha_k=1}^{r_k}
\mathscr{G}_1(\alpha_0,j_1,\alpha_1)
\mathscr{G}_2(\alpha_1,j_2,\alpha_2)
\cdots
\mathscr{G}_k(\alpha_{k-1},j_k,\alpha_k),
\end{equation*}
where $\mathscr{G}_t\in\mathbb{R}^{r_{t-1}\times n_t\times r_t}$, $t=1,2,\dots,k$, are referred to as the core tensors, and $\{r_t\}_{t=0}^{k}$ are called the TT-ranks with $r_0=r_k=1$.
\end{definition} 

\begin{definition}[HTD]
Let $\mathscr{T}\in\mathbb{R}^{n_1\times n_2\times\cdots\times n_k}$ be a $k$th-order tensor, and let  $\mathcal{T}$ be a dimension tree associated with the index set $D=\{1,2,\dots,k\}$. The HTD of $\mathscr{T}$ is defined recursively on $\mathcal{T}$ as
\begin{equation*}
\textbf{U}_t(:,\alpha_t)=
\sum_{\alpha_{t_1}=1}^{r_{t_1}}
\sum_{\alpha_{t_2}=1}^{r_{t_2}}
\mathscr{B}_t(\alpha_{t_1},\alpha_{t_2},\alpha_t)
\textbf{U}_{t_1}(:,\alpha_{t_1})
\otimes
\textbf{U}_{t_2}(:,\alpha_{t_2}),
\end{equation*}
where $t_1$ and $t_2$ denote the two child nodes of node $t$ in the dimension tree $\mathcal{T}$, $\mathscr{B}_t\in
\mathbb{R}^{r_{t_1}\times r_{t_2}\times r_t}$ is  the transfer tensor associated with node $t$, and $r_t$ is the corresponding hierarchical rank. For each leaf node $t\in\{1,2,\dots,k\}$, the factor matrices satisfy $\textbf{U}_{t}\in\mathbb{R}^{n_t\times r_t}$. The original tensor $\mathscr{T}$ is recovered recursively by contracting the transfer tensors and factor matrices from the leaf nodes to the root of the dimension tree.
\end{definition}

It is worth noting that for third-order tensors, HTD can be equivalently written as the standard Tucker decomposition (TD), up to a reparameterization of the core tensor. Specifically, given a third-order tensor $\mathscr{T}\in\mathbb{R}^{n_1\times n_2\times n_3}$, its TD  is defined as
\begin{equation*}
    \mathscr{T}= \mathscr{G}\times_1\textbf{U}_1\times_2\textbf{U}_2\times_3 \textbf{U}_3,
\end{equation*}
where $\mathscr{G}\in\mathbb{R}^{r_1\times r_2\times r_3}$ is referred to as the core tensor, and $\textbf{U}_t\in\mathbb{R}^{n_t\times r_t}$, $t=1,2,3$, are the factor matrices. In this case, the HTD of $\mathscr{T}$ reduces to a single-level dimension tree, and its transfer tensors can be absorbed into a single Tucker core tensor, thereby yielding an equivalent representation. We will adopt the standard Tucker format in place of HTD for the development of the T-product algebra.

\section{TTD- and HTD-based T-product algebra}\label{sec:3rdoperation}
In this section, we develop decomposition-based formulations for the fundamental operations of the T-product algebra, including block diagonalization, the T-product, and T-SVD, by exploiting the low-rank structures provided by TTD and HTD. Specifically, we reformulate these operations directly in terms of the decomposition factors,  enabling efficient computation without reconstructing the full tensors.

\subsection{Block diagonalization}
Block diagonalization plays a central role in T-product algebra, as it transforms the block-circulant structure of a third-order tensor into a collection of independent matrices in the Fourier domain. Specifically, given a third-order tensor $\mathscr{T}\in\mathbb{R}^{n_1\times n_2\times n_3}$, we compute its diagonal block  matrices $\textbf{T}_{j_3}\in\mathbb{C}^{n_1\times n_2}$, $j_3=1,2,\dots,n_3$, as defined in \cref{eq:fourier_def}. Although the fast Fourier transform provides a direct and efficient way to perform this operation, the computation can become expensive for large-scale tensors. Notably, if a low-rank TTD or HTD of $\mathscr{T}$ is available, the diagonal block matrices $\textbf{T}_{j_3}$ can be constructed more efficiently by exploiting the factorized structure of the tensor.

\begin{proposition}[TTD-based block diagonalization]\label{prop:ttdbd}
    Let $\mathscr{T}\in\mathbb{R}^{n_1\times n_2\times n_3}$ 
be a third-order tensor in the TTD format with core tensors
$\mathscr{G}_1\in\mathbb{R}^{1\times n_1\times r_1}$,
$\mathscr{G}_2\in\mathbb{R}^{r_1\times n_2\times r_2}$, and 
$\mathscr{G}_3\in\mathbb{R}^{r_2\times n_3\times 1}$. Then the diagonal block  matrices of $\mathscr{T}$ in the Fourier domain are computed as
\begin{align}\label{eq:ttdbd}
\textbf{T}_{j_3}(j_1,j_2)
= 
\sum_{\alpha_1=1}^{r_1}\sum_{\alpha_2=1}^{r_2} \mathscr{G}_1(j_1,\alpha_1)\, \mathscr{G}_2(\alpha_1,j_2,\alpha_2)\, \widetilde{\mathscr{G}}_3(\alpha_2,j_3), 
\end{align}
where $\widetilde{\mathscr{G}}_3 = \mathscr{G}_3 \times_2 \textbf{F}_{n_3}$, for $j_3=1,2,\dots, n_3$.
\end{proposition}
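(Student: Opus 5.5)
The plan is to split the argument into two independent pieces. First, I identify the diagonal blocks $\textbf{T}_{j_3}$ in \cref{eq:fourier_def} as the mode-$3$ discrete Fourier transform of $\mathscr{T}$, that is, $\textbf{T}_{j_3}=(\mathscr{T}\times_3\textbf{F}_{n_3})(:,:,j_3)$. Second, I substitute the TTD of $\mathscr{T}$ and use linearity of the mode-$3$ product to move the transform onto the last core $\mathscr{G}_3$. Only the first piece needs real work; the second is bookkeeping of indices.

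For the first piece, I write the block-circulant matrix as $\texttt{bcirc}(\mathscr{T})=\sum_{k=1}^{n_3}\textbf{C}^{k-1}\otimes\mathscr{T}(:,:,k)$, where $\textbf{C}\in\mathbb{R}^{n_3\times n_3}$ is the cyclic downshift permutation matrix. This identity can be checked directly from the definition of $\texttt{bcirc}$: block $(p,q)$ equals $\mathscr{T}(:,:,((p-q)\bmod n_3)+1)$. By the mixed-product property,
\begin{equation*}
(\textbf{F}_{n_3}\otimes\textbf{I}_{n_1})\,\texttt{bcirc}(\mathscr{T})\,(\textbf{F}_{n_3}^{*}\otimes\textbf{I}_{n_2})=\sum_{k=1}^{n_3}\big(\textbf{F}_{n_3}\textbf{C}^{k-1}\textbf{F}_{n_3}^{*}\big)\otimes\mathscr{T}(:,:,k).
\end{equation*}
I then use the classical fact that the unitary DFT diagonalizes $\textbf{C}$, namely $\textbf{F}_{n_3}\textbf{C}\textbf{F}_{n_3}^{*}=\mathrm{diag}(1,\omega,\dots,\omega^{n_3-1})$ up to the sign convention of $\omega$. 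This gives
\begin{equation*}
\textbf{T}_{j_3}=\sum_{k=1}^{n_3}\omega^{(j_3-1)(k-1)}\,\mathscr{T}(:,:,k),
\end{equation*}
which is the mode-$3$ DFT of $\mathscr{T}$ applied slice by slice.

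I expect this first piece to be the main obstacle. The difficulty is not conceptual but a matter of conventions. I must verify that the eigenvalue ordering produced by $\textbf{F}_{n_3}\textbf{C}\textbf{F}_{n_3}^{*}$ matches the row ordering of $\textbf{F}_{n_3}$; otherwise one gets $\omega^{-(j_3-1)(k-1)}$, a conjugate or reindexed ordering of the blocks. I must also reconcile normalizations: the eigenvalues are unnormalized, whereas the displayed $\textbf{F}_{n_3}$ carries a factor $1/\sqrt{n_3}$. I would state explicitly that in $\widetilde{\mathscr{G}}_3=\mathscr{G}_3\times_2\textbf{F}_{n_3}$ the transform is understood with the scaling that makes the identity exact, namely $\sqrt{n_3}\,\textbf{F}_{n_3}$. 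This is the same convention under which $\texttt{fft}$ is applied along mode $3$ in practice, and it is harmless: it only rescales every block by $\sqrt{n_3}$.

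For the second piece, the TTD with $r_0=r_3=1$ reads
\begin{equation*}
\mathscr{T}(j_1,j_2,k)=\sum_{\alpha_1=1}^{r_1}\sum_{\alpha_2=1}^{r_2}\mathscr{G}_1(j_1,\alpha_1)\,\mathscr{G}_2(\alpha_1,j_2,\alpha_2)\,\mathscr{G}_3(\alpha_2,k),
\end{equation*}
after suppressing the singleton indices. I insert this into the DFT formula above and interchange the finite sums over $k$ and over $(\alpha_1,\alpha_2)$. Only $\mathscr{G}_3$ depends on $k$, so the $k$-sum collapses to
\begin{equation*}
\sum_{k=1}^{n_3}\omega^{(j_3-1)(k-1)}\mathscr{G}_3(\alpha_2,k)=(\mathscr{G}_3\times_2\textbf{F}_{n_3})(\alpha_2,j_3)=\widetilde{\mathscr{G}}_3(\alpha_2,j_3),
\end{equation*}
with the scaling convention fixed above. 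This yields \cref{eq:ttdbd} for each $j_3=1,2,\dots,n_3$.
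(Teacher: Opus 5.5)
Your proof is correct and follows the paper's route: identify $\textbf{T}_{j_3}$ with the mode-$3$ DFT of $\mathscr{T}$, substitute the TTD, and absorb the transform into $\mathscr{G}_3$ by linearity. The only difference is that the paper asserts the first step as ``the definition'', whereas your circulant/Kronecker computation proves it, and the ordering you worried about checks out: with the downshift $\textbf{C}$ and $\omega=\exp(-2\pi i/n_3)$ one gets $\textbf{F}_{n_3}\textbf{C}\textbf{F}_{n_3}^{*}=\mathrm{diag}(1,\omega,\dots,\omega^{n_3-1})$, so there is no conjugation or reordering. Your normalization caveat is also right and is a point the paper's proof passes over: with the unitary $\textbf{F}_{n_3}$ of \eqref{eq:fourier_def}, $\textbf{T}_{j_3}=\sqrt{n_3}\,(\mathscr{T}\times_3\textbf{F}_{n_3})(:,:,j_3)$, so \eqref{eq:ttdbd} holds exactly only when $\widetilde{\mathscr{G}}_3$ is formed with the unnormalized transform $\sqrt{n_3}\,\textbf{F}_{n_3}$ (as \texttt{fft} does), and otherwise only up to that scalar.
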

\begin{proof}
   From the definition of block diagonalization in the Fourier domain, the $j$th block diagonal matrix is obtained by applying the discrete Fourier transform along the third mode, equivalently, by multiplying the discrete Fourier matrix $\textbf{F}_{n_3}$ along the third mode of $\mathscr{T}$. Substituting the TTD of $\mathscr{T}$ into this transformation yields
    \begin{align*}
\textbf{T}_{j_3}(j_1,j_2)
=
\sum_{\alpha_1=1}^{r_1}
\sum_{\alpha_2=1}^{r_2}
\mathscr{G}_1(j_1,\alpha_1)
\mathscr{G}_2(\alpha_1,j_2,\alpha_2)
\Bigg(
\sum_{j=1}^{n_3}
\mathscr{G}_3(\alpha_2,j)
\textbf{F}_{n_3}(j,j_3)\Bigg).
\end{align*}
Define $\widetilde{\mathscr{G}}_3 = \mathscr{G}_3 \times_2 \textbf{F}_{n_3}$, and the desired result then follows immediately.
\end{proof}

\begin{proposition}[HTD/TD-based block diagonalization]\label{prop:htdbd}
    Let $\mathscr{T}\in\mathbb{R}^{n_1\times n_2\times n_3}$
be a third-order tensor in the HTD/TD format with core tensor
$\mathscr{G}\in\mathbb{R}^{r_1\times r_2\times r_3}$,
and factor matrices $\textbf{U}_1\in\mathbb{R}^{n_1\times r_1}$, $\textbf{U}_2\in\mathbb{R}^{n_2\times r_2}$, and $\textbf{U}_3\in\mathbb{R}^{n_3\times r_3}$. Then the diagonal block  matrices of $\mathscr{T}$ in the Fourier domain are computed as
\begin{align}\label{eq:htdbd}
\textbf{T}_{j_3}(j_1,j_2)
= 
\sum_{\alpha_1=1}^{r_1}\sum_{\alpha_2=1}^{r_2} \sum_{\alpha_3=1}^{r_3}\mathscr{G}(\alpha_1,\alpha_2,\alpha_3)\textbf{U}_1(j_1,\alpha_1)\textbf{U}_2(j_2,\alpha_2)\widetilde{\textbf{U}}_3(j_3,\alpha_3),
\end{align}
where $\widetilde{\textbf{U}}_3=\textbf{F}_{n_3}\textbf{U}_3$, for $j_3=1,2,\dots,n_3$.
\end{proposition}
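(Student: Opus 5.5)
The plan mirrors the proof of \cref{prop:ttdbd}, with the Tucker factor $\textbf{U}_3$ playing the role of the third TT core. First I will record the standard fact behind \cref{eq:fourier_def}: the diagonal blocks are the frontal slices of $\widehat{\mathscr{T}} = \mathscr{T}\times_3 \textbf{F}_{n_3}$. Concretely,
\[
\textbf{T}_{j_3}(j_1,j_2)=\sum_{j=1}^{n_3}\textbf{F}_{n_3}(j_3,j)\,\mathscr{T}(j_1,j_2,j),
\]
with the normalization of $\textbf{F}_{n_3}$ taken as in \cref{eq:fourier_def}, consistently with \cref{prop:ttdbd}. This holds because $(\textbf{F}_{n_3}\otimes\textbf{I}_{n_1})$ and $(\textbf{F}_{n_3}^*\otimes\textbf{I}_{n_2})$ diagonalize any block-circulant matrix. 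Its block entries depend only on $(p-q)\bmod n_3$, so the $(p,q)$ block of the product equals $\delta_{pq}\sum_j \omega^{(p-1)(j-1)}\mathscr{T}(:,:,j)$. I will either take this as the given definition, as in the proof of \cref{prop:ttdbd}, or verify it by this short circulant calculation.

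Next I will substitute the entrywise Tucker expansion
\[
\mathscr{T}(j_1,j_2,j)=\sum_{\alpha_1,\alpha_2,\alpha_3}\mathscr{G}(\alpha_1,\alpha_2,\alpha_3)\textbf{U}_1(j_1,\alpha_1)\textbf{U}_2(j_2,\alpha_2)\textbf{U}_3(j,\alpha_3)
\]
into this sum. All sums are finite, so I can interchange them and move the $j$-sum inside, next to the only factor that depends on $j$. The inner sum is
\[
\sum_{j}\textbf{F}_{n_3}(j_3,j)\textbf{U}_3(j,\alpha_3)=(\textbf{F}_{n_3}\textbf{U}_3)(j_3,\alpha_3)=\widetilde{\textbf{U}}_3(j_3,\alpha_3).
\]
Substituting this back gives \cref{eq:htdbd} directly. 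Equivalently, in mode-product language, $\mathscr{T}\times_3\textbf{F}_{n_3}=\mathscr{G}\times_1\textbf{U}_1\times_2\textbf{U}_2\times_3(\textbf{F}_{n_3}\textbf{U}_3)$, by the rule $(\mathscr{X}\times_3\textbf{A})\times_3\textbf{B}=\mathscr{X}\times_3(\textbf{B}\textbf{A})$.

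The only step needing any care is the first: identifying the diagonal blocks with the mode-3 DFT slices, and keeping the index convention of $\textbf{F}_{n_3}$ consistent. Since $\textbf{F}_{n_3}$ is symmetric, $\textbf{F}_{n_3}(j,j_3)=\textbf{F}_{n_3}(j_3,j)$, so the row and column convention issue in \cref{prop:ttdbd} does not matter. The remaining steps are routine index manipulation.
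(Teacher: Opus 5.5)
Your proof is essentially the paper's: the paper's one-line argument is exactly your closing observation that $\mathscr{T}\times_3\textbf{F}_{n_3}=\mathscr{G}\times_1\textbf{U}_1\times_2\textbf{U}_2\times_3(\textbf{F}_{n_3}\textbf{U}_3)$, and like you it takes for granted, as in \cref{prop:ttdbd}, that the diagonal blocks are the mode-3 DFT slices. One caveat applies if you actually carry out your circulant check: it gives the unnormalized sum $\sum_{j}\omega^{(j_3-1)(j-1)}\mathscr{T}(:,:,j)$, which is $\sqrt{n_3}$ times $\sum_{j}\textbf{F}_{n_3}(j_3,j)\mathscr{T}(:,:,j)$ under the $1/\sqrt{n_3}$ normalization of \cref{eq:fourier_def}, so your first display (and the proposition as the paper states it) holds only up to that factor, or with the unnormalized DFT matrix in place of $\textbf{F}_{n_3}$ --- a convention mismatch inherited from the paper that does not change the structure of the argument.
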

\begin{proof}
The proof follows immediately from the observation that multiplying $\mathscr{T}$ by $\textbf{F}_{n_3}$ along the third mode is equivalent to replacing the factor matrix $\textbf{U}_3$ with $\textbf{F}_{n_3}\textbf{U}_3$ in the HTD/TD of $\mathscr{T}$.
\end{proof}


Both propositions provide efficient procedures for constructing the diagonal blocks when the TT- or hierarchical ranks are small relative to the tensor dimensions. The computations can be further accelerated by matricizing intermediate contractions. For the TTD-based approach, reshaping the core tensors allows the contractions to be performed using matrix-vector and matrix-matrix multiplications rather than entrywise tensor contractions. The same strategy applies to the HTD/TD-based approach.

\begin{remark}
    Assume $n_1 = n_2 = n_3 = n$ and let $r$ denote the maximum  TT-rank or hierarchical rank. The computational costs of TTD-based and HTD/TD-based block diagonalization  are estimated as $\mathcal{O}(rn \log n + n(r^2n  + rn^2))$ and $\mathcal{O}(rn\log n  + n(r^3 + r^2n  + rn^2))$, respectively. In contrast, the definition-based approach first constructs the full tensor and then applies the fast Fourier transform along the third mode, which costs $\mathcal{O}(n^3 \log n)$ and requires storing the entire tensor. 
\end{remark}

\subsection{T-product} Using the block-diagonal representation, the T-product can be computed by performing independent matrix multiplications on the corresponding diagonal blocks in the Fourier domain, followed by the inverse Fourier transform to recover the resulting tensor.

\begin{corollary}\label{coro:ttdtproduct}
    Given two third-order tensors 
$\mathscr{T}\in\mathbb{R}^{n_1\times l\times n_3}$ and
$\mathscr{S}\in\mathbb{R}^{l\times n_2\times n_3}$ in the TTD format with core tensors $\mathscr{G}_1$, $\mathscr{G}_2$, $\mathscr{G}_3$ and $\mathscr{H}_1$, 
$\mathscr{H}_2$,
$\mathscr{H}_3$, respectively, their T-product $\mathscr{T}\star\mathscr{S}\in\mathbb{R}^{n_1\times n_2\times n_3}$ is computed as
\begin{equation*}
   \mathscr{T}\star\mathscr{S} = \texttt{bcirc}^{-1}\Big(\mathcal{F}^{-1}(\texttt{blkdiag}\big(\textbf{T}_1\textbf{S}_1,\textbf{T}_2\textbf{S}_2,\dots,\textbf{T}_{n_3}\textbf{S}_{n_3})\big)\Big),
\end{equation*}
where $\textbf{T}_{j_3}$ and $\textbf{S}_{j_3}$ denote the 
$j_3$th diagonal blocks  of $\mathscr{T}$ and $\mathscr{S}$ in the Fourier domain, respectively, computed from their TTD representations according to \eqref{eq:ttdbd}.
\end{corollary}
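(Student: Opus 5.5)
The plan is to combine the block-diagonalization identity \eqref{eq:fourier_def} with the multiplicativity of block-circulant matrices, and then invoke \cref{prop:ttdbd} to supply the blocks. Nothing about the TTD structure is needed beyond that last step; the corollary is the classical Fourier-domain characterization of the T-product, specialized to the case where the blocks come from TT cores.

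First, I will show that $\texttt{bcirc}(\mathscr{T}\star\mathscr{S})=\texttt{bcirc}(\mathscr{T})\,\texttt{bcirc}(\mathscr{S})$.

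1. Write $\mathscr{C}=\mathscr{T}\star\mathscr{S}$. By \cref{def:3rdtprod}, $\texttt{unfold}(\mathscr{C})=\texttt{bcirc}(\mathscr{T})\texttt{unfold}(\mathscr{S})$.
2. The first block column of $\texttt{bcirc}(\mathscr{S})$ is $\texttt{unfold}(\mathscr{S})$. Its $k$th block column is obtained by applying the block cyclic downshift $\mathbf{Z}\otimes\mathbf{I}$ to the first block column $k-1$ times, where $\mathbf{Z}$ is the $n_3\times n_3$ cyclic shift matrix.
3. $\texttt{bcirc}(\mathscr{T})$ commutes with $\mathbf{Z}\otimes\mathbf{I}$, in the sense $(\mathbf{Z}\otimes\mathbf{I}_{n_1})\texttt{bcirc}(\mathscr{T})=\texttt{bcirc}(\mathscr{T})(\mathbf{Z}\otimes\mathbf{I}_l)$, since its $(p,q)$ block depends only on $p-q \bmod n_3$.
4. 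Hence the $k$th block column of $\texttt{bcirc}(\mathscr{T})\texttt{bcirc}(\mathscr{S})$ is the $(k-1)$-fold shift of $\texttt{unfold}(\mathscr{C})$. That is exactly the $k$th block column of $\texttt{bcirc}(\mathscr{C})$.

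Next, I will conjugate by the block Fourier matrices. Since $\mathbf{F}_{n_3}$ is unitary, inserting $(\mathbf{F}_{n_3}^*\otimes\mathbf{I}_l)(\mathbf{F}_{n_3}\otimes\mathbf{I}_l)=\mathbf{I}$ between the two factors and applying \eqref{eq:fourier_def} to each gives
\begin{equation*}
\mathcal{F}\{\texttt{bcirc}(\mathscr{C})\}=\texttt{blkdiag}(\mathbf{T}_1,\dots,\mathbf{T}_{n_3})\,\texttt{blkdiag}(\mathbf{S}_1,\dots,\mathbf{S}_{n_3})=\texttt{blkdiag}(\mathbf{T}_1\mathbf{S}_1,\dots,\mathbf{T}_{n_3}\mathbf{S}_{n_3}).
\end{equation*}

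Then I will invert the transform. Applying $\mathcal{F}^{-1}$, meaning conjugation by the inverse block Fourier matrices, recovers $\texttt{bcirc}(\mathscr{C})$. The map $\texttt{bcirc}$ is injective, since its first block column equals $\texttt{unfold}(\mathscr{C})$, so $\texttt{bcirc}^{-1}$ is well defined on its image. This yields the stated formula.

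Finally, \cref{prop:ttdbd} applied separately to the cores $\mathscr{G}_t$ and $\mathscr{H}_t$ shows that $\mathbf{T}_{j_3}$ and $\mathbf{S}_{j_3}$ are computed via \eqref{eq:ttdbd}, which completes the argument.

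The only step requiring care is the multiplicativity $\texttt{bcirc}(\mathscr{T}\star\mathscr{S})=\texttt{bcirc}(\mathscr{T})\texttt{bcirc}(\mathscr{S})$, specifically the index bookkeeping for the cyclic shift with mismatched block sizes $n_1$, $l$, $n_2$. A cleaner alternative is to show that the product of two block-circulant matrices is block circulant with $(p,q)$ block $\sum_s \mathscr{T}(:,:,p-s)\mathscr{S}(:,:,s-q)$, indices mod $n_3$. That block depends only on $p-q$, and its first block column reproduces $\texttt{bcirc}(\mathscr{T})\texttt{unfold}(\mathscr{S})$.
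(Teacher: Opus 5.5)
Your proof is correct and follows the same route as the paper, whose one-line proof cites the Fourier-domain property of the T-product together with \cref{prop:ttdbd}; you prove that property directly, via multiplicativity of \texttt{bcirc} (commutation with the block cyclic shift) followed by unitary block-Fourier conjugation. One caveat, which the paper shares: with the unitary $\textbf{F}_{n_3}$ in \eqref{eq:fourier_def}, the diagonal blocks there are $\sqrt{n_3}$ times those given by \eqref{eq:ttdbd}, so your final identification via \cref{prop:ttdbd} is exact only if the unnormalized DFT is used in \eqref{eq:ttdbd}; otherwise the formula returns $\frac{1}{n_3}\,\mathscr{T}\star\mathscr{S}$.
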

\begin{proof}
    The proof follows immediately from the property of the T-product under the Fourier domain and \cref{prop:ttdbd}.
\end{proof}

Similarly, for tensors represented in the HTD/TD format, the T-product can be computed using the block-diagonal representation established in \cref{prop:htdbd}.  However, this approach does not fully exploit the underlying low-rank TTD or HTD/TD structure, as it requires the explicit formation of intermediate diagonal blocks. A more efficient formulation can be obtained by expressing the T-product directly in terms of the TTD or HTD/TD factors of the input tensors, thereby preserving the compressed structure throughout the computation.

\begin{proposition}[TTD-based T-product]\label{prop:ttd_tprod}
Given two third-order tensors 
$\mathscr{T}\in\mathbb{R}^{n_1\times l\times n_3}$ and
$\mathscr{S}\in\mathbb{R}^{l\times n_2\times n_3}$ in the TTD format with core tensors $\mathscr{G}_1$, $\mathscr{G}_2$, $\mathscr{G}_3$ and $\mathscr{H}_1$, 
$\mathscr{H}_2$,
$\mathscr{H}_3$, and TT-ranks $\{1,r_1,r_2,1\}$ and $\{1,s_1,s_2,1\}$, respectively, their T-product $\mathscr{T}\star\mathscr{S}\in\mathbb{R}^{n_1\times n_2\times n_3}$ admits a TTD representation with core tensors
\begin{align*}
     &\mathscr{P}_1(1,j_1,\phi(\alpha_1,\beta_1)) = \mathscr{G}_1(1,j_1,\alpha_1), \\
    &\mathscr{P}_2(\phi(\alpha_1,\beta_1),j_2,\phi(\alpha_2,\beta_2)) = \sum_{j=1}^{l}
    \mathscr{G}_2(\alpha_1,j,\alpha_2)\,
    \mathscr{H}_1(1,j,\beta_1)\,
    \mathscr{H}_2(\beta_1,j_2,\beta_2), \\
    &\mathscr{P}_3 = \widetilde{\mathscr{P}}_3 \times_2 \textbf{F}_{n_3}^{-1} \text{ with } \widetilde{\mathscr{P}}_3(\phi(\alpha_2,\beta_2),j_3,1) =
    \widetilde{\mathscr{G}}_3(\alpha_2,j_3,1)\,
    \widetilde{\mathscr{H}}_3(\beta_2,j_3,1),
\end{align*} 
where $\phi(\alpha_t,\beta_t)=\alpha_t+(\beta_t-1)r_t$ for $t=1,2$, $\widetilde{\mathscr{G}}_3=\mathscr{G}_3\times_2\textbf{F}_{n_3}$, and
$\widetilde{\mathscr{H}}_3=\mathscr{H}_3\times_2\textbf{F}_{n_3}$,  and with TT-ranks $\{1,r_1s_1,r_2s_2,1\}$.
\end{proposition}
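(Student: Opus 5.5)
The plan is to work entirely in the Fourier domain, using \cref{prop:ttdbd} and the facewise characterization of the T-product. By \cref{coro:ttdtproduct}, the $j_3$th diagonal block of $\mathscr{T}\star\mathscr{S}$ in the Fourier domain is $\textbf{T}_{j_3}\textbf{S}_{j_3}$. Equivalently, $(\mathscr{T}\star\mathscr{S})\times_3\textbf{F}_{n_3}$ has frontal slices $\textbf{T}_{j_3}\textbf{S}_{j_3}$. So it suffices to show two things: that the tensor built from the proposed cores, after transforming along mode $3$, has these slices, and that transforming back along mode $3$ affects only the last core.

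The first step expands the product using \eqref{eq:ttdbd} for both factors:
\begin{equation*}
(\textbf{T}_{j_3}\textbf{S}_{j_3})(j_1,j_2)=\sum_{j=1}^{l}\sum_{\alpha_1,\alpha_2}\sum_{\beta_1,\beta_2}\mathscr{G}_1(j_1,\alpha_1)\mathscr{G}_2(\alpha_1,j,\alpha_2)\widetilde{\mathscr{G}}_3(\alpha_2,j_3)\,\mathscr{H}_1(j,\beta_1)\mathscr{H}_2(\beta_1,j_2,\beta_2)\widetilde{\mathscr{H}}_3(\beta_2,j_3).
\end{equation*}
All sums are finite, so I can regroup freely. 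I then group the factors as follows:
\begin{itemize}
\item $\mathscr{G}_1(j_1,\alpha_1)$, which does not depend on $\beta_1$;
\item the $j$-sum $\sum_j \mathscr{G}_2(\alpha_1,j,\alpha_2)\mathscr{H}_1(j,\beta_1)\mathscr{H}_2(\beta_1,j_2,\beta_2)$;
\item the pointwise product $\widetilde{\mathscr{G}}_3(\alpha_2,j_3)\widetilde{\mathscr{H}}_3(\beta_2,j_3)$.
\end{itemize}
Indexing the pairs $(\alpha_t,\beta_t)$ by the single index $\phi(\alpha_t,\beta_t)$, which is a bijection onto $\{1,\dots,r_ts_t\}$, turns this into a three-core train contraction with cores $\mathscr{P}_1,\mathscr{P}_2,\widetilde{\mathscr{P}}_3$ and ranks $r_1s_1, r_2s_2$.

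One point needs care. $\mathscr{P}_1(1,j_1,\phi(\alpha_1,\beta_1))$ is independent of $\beta_1$, yet the summation over $\beta_1$ must still be carried. It is: $\beta_1$ appears in $\mathscr{P}_2$ through $\mathscr{H}_1(1,j,\beta_1)$, so the double sum over $\phi(\alpha_1,\beta_1)$ reproduces exactly the $\alpha_1,\beta_1$ sums above. I will write this bookkeeping out explicitly, since it is the only place an index could be dropped.

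The final step undoes the transform. Mode-$3$ multiplication by $\textbf{F}_{n_3}^{-1}$ acts only on the index $j_3$, and in the TT representation $j_3$ occurs only in the last core. This is the same observation used in the proof of \cref{prop:ttdbd}, applied in reverse. Hence $\mathscr{T}\star\mathscr{S}=(\text{TT}(\mathscr{P}_1,\mathscr{P}_2,\widetilde{\mathscr{P}}_3))\times_3\textbf{F}_{n_3}^{-1}=\text{TT}(\mathscr{P}_1,\mathscr{P}_2,\widetilde{\mathscr{P}}_3\times_2\textbf{F}_{n_3}^{-1})$.

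I expect no real obstacle; the work is careful index bookkeeping. Two details deserve attention. First, the Fourier convention must be consistent: the mode-$3$ transform in \cref{prop:ttdbd} and in \cref{coro:ttdtproduct} must be the same matrix, so that its inverse is exactly $\textbf{F}_{n_3}^{-1}$. Second, the resulting $\mathscr{P}_3$ is real, since the T-product of real tensors is real, although intermediate quantities are complex.
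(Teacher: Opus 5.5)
Your proposal is correct and follows the paper's proof essentially step for step: expand $\textbf{T}_{j_3}\textbf{S}_{j_3}$ via \eqref{eq:ttdbd}, regroup into $\mathscr{G}_1$, the $j$-contraction defining $\mathscr{P}_2$, and the pointwise product of the transformed last cores, merge index pairs through $\phi$, and apply $\textbf{F}_{n_3}^{-1}$ to the last core only. The convention point you flagged is a real issue, and the paper's proof has it too: with the unitary $\textbf{F}_{n_3}$ as defined, $(\textbf{F}_{n_3}\otimes\textbf{I}_{n_1})\texttt{bcirc}(\mathscr{T})(\textbf{F}_{n_3}^{*}\otimes\textbf{I}_{n_2})$ has diagonal blocks equal to $\sqrt{n_3}$ times the frontal slices of $\mathscr{T}\times_3\textbf{F}_{n_3}$, so your step ``$(\mathscr{T}\star\mathscr{S})\times_3\textbf{F}_{n_3}$ has frontal slices $\textbf{T}_{j_3}\textbf{S}_{j_3}$'' (and the statement itself) is exact only if $\textbf{F}_{n_3}$ is read as the unnormalized DFT matrix, while with the unitary one the constructed train equals $(\mathscr{T}\star\mathscr{S})/\sqrt{n_3}$, as the scalar case $n_1=n_2=l=1$, $n_3=2$ already shows.
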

\begin{proof}
    Denote by $\mathscr{C}=\mathscr{T}\star\mathscr{S}$  the T-product and by $\widetilde{\mathscr{C}}$  its representation in the Fourier domain. Since $\widetilde{\mathscr{C}}(j_1,j_2,j_3) = \sum_{j=1}^{l}\textbf{T}_{j_3}(j_1,j)\textbf{S}_{j_3}(j,j_2)$ for $j_3=1,2,\dots, n_3$, substituting $\textbf{T}_{j_3}(j_1,j)$ and $\textbf{S}_{j_3}(j,j_2)$ with the corresponding core representations according to \eqref{eq:ttdbd} yields
    \begin{align*}
\widetilde{\mathscr{C}}(j_1,j_2,j_3)&= 
\sum_{j=1}^{l}
\sum_{\alpha_1=1}^{r_1}
\sum_{\alpha_2=1}^{r_2}
\sum_{\beta_1=1}^{s_1}
\sum_{\beta_2=1}^{s_2}
\mathscr{G}_1(1,j_1,\alpha_1)
\mathscr{G}_2(\alpha_1,j,\alpha_2)
\widetilde{\mathscr{G}}_3(\alpha_2,j_3,1)\\
& \times\mathscr{H}_1(1,j,\beta_1)\, \mathscr{H}_2(\beta_1,j_2,\beta_2)\, \widetilde{\mathscr{H}}_3(\beta_2,j_3,1) 
    \end{align*}
Rearranging the terms according to the output indices, we obtain 
\begin{align*}
\widetilde{\mathscr{C}}(j_1,j_2,j_3)
&= 
\sum_{\alpha_1=1}^{r_1}
\sum_{\alpha_2=1}^{r_2}
\sum_{\beta_1=1}^{s_1}
\sum_{\beta_2=1}^{s_2}
\mathscr{G}_1(1,j_1,\alpha_1)
\Bigg(
\sum_{j=1}^{l}
\mathscr{G}_2(\alpha_1,j,\alpha_2)
\mathscr{H}_1(1,j,\beta_1)\\
&\times\mathscr{H}_2(\beta_1,j_2,\beta_2)
\Bigg)
\widetilde{\mathscr{G}}_3(\alpha_2,j_3,1)\,
\widetilde{\mathscr{H}}_3(\beta_2,j_3,1).
\end{align*}
Introducing the combined indices $\gamma_1=\phi(\alpha_1,\beta_1)$ and $\gamma_2=\phi(\alpha_2,\beta_2)$ and using the definitions of $\mathscr{P}_1$, $\mathscr{P}_2$, and $\widetilde{\mathscr{P}}_3$, we obtain
\begin{equation*}
    \widetilde{\mathscr{C}}(j_1,j_2,j_3)
=
\sum_{\gamma_1=1}^{r_1s_1}
\sum_{\gamma_2=1}^{r_2s_2}
\mathscr{P}_1(1,j_1,\gamma_1)\,
\mathscr{P}_2(\gamma_1,j_2,\gamma_2)\,
\widetilde{\mathscr{P}}_3(\gamma_2,j_3,1).
\end{equation*}
Applying the inverse Fourier transform to the third core $\mathscr{P}_3 = \widetilde{\mathscr{P}}_3 \times_2 \textbf{F}_{n_3}^{-1}$ yields the TTD representation of $\mathscr{C}$.
\end{proof}

\begin{proposition}[HTD/TD-based T-product]\label{prop:htdtd_tprod}
Given two third-order tensors 
$\mathscr{T}\in\mathbb{R}^{n_1\times l\times n_3}$ and
$\mathscr{S}\in\mathbb{R}^{l\times n_2\times n_3}$ in the HTD/TD format with core tensors $\mathscr{G}\in\mathbb{R}^{r_1\times r_2\times r_3}$ and $\mathscr{H}\in\mathbb{R}^{s_1\times s_2\times s_3}$ and factor matrices $\textbf{U}_t$ and $\textbf{V}_t$, $t=1,2,3$, respectively, their T-product $\mathscr{T}\star\mathscr{S}\in\mathbb{R}^{n_1\times n_2\times n_3}$ admits an HTD/TD representation with core tensor $\mathscr{P} = \widetilde{\mathscr{P}} \times_3\textbf{F}_{n_3}^{-1}\in\mathbb{R}^{r_1\times s_2\times n_3}$ where
\begin{equation*}
\scalebox{0.92}{$\displaystyle
\widetilde{\mathscr{P}}(\alpha_1,\beta_2,\gamma_3)
=
\sum_{\alpha_2=1}^{r_2}
\sum_{\beta_1=1}^{s_1}
\sum_{\alpha_3=1}^{r_3}
\sum_{\beta_3=1}^{s_3}
\mathscr{G}(\alpha_1,\alpha_2,\alpha_3)
\textbf{M}(\alpha_2,\beta_1)
\mathscr{H}(\beta_1,\beta_2,\beta_3) 
\widetilde{\textbf{U}}_3(\gamma_3,\alpha_3)
\widetilde{\textbf{V}}_3(\gamma_3,\beta_3),$}
\end{equation*}
with $\textbf{M}=\textbf{U}_{2}^\top\textbf{V}_{1}
\in\mathbb{R}^{r_2\times s_1}$,  $\widetilde{\textbf{U}}_3=\textbf{F}_{n_3}\textbf{U}_3\in\mathbb{C}^{n_3\times r_3}$, and $\widetilde{\textbf{V}}_3=\textbf{F}_{n_3}\textbf{V}_3\in\mathbb{C}^{n_3\times s_3}$, and with factor matrices $\textbf{U}_1$, $\textbf{V}_2$, and $\textbf{I}_{n_3}$. 
\end{proposition}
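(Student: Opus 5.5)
We will follow the same pattern as the proof of \cref{prop:ttd_tprod}, working entirely in the Fourier domain. Denote $\mathscr{C}=\mathscr{T}\star\mathscr{S}$ and let $\widetilde{\mathscr{C}}=\mathscr{C}\times_3\textbf{F}_{n_3}$ be its Fourier-domain representation. By the block-diagonalization property of the T-product, the frontal slices satisfy
\begin{equation*}
\widetilde{\mathscr{C}}(j_1,j_2,j_3)=\sum_{j=1}^{l}\textbf{T}_{j_3}(j_1,j)\,\textbf{S}_{j_3}(j,j_2), \qquad j_3=1,2,\dots,n_3 .
\end{equation*}
Into this we substitute \eqref{eq:htdbd} from \cref{prop:htdbd}, written once for $\mathscr{T}$ (core $\mathscr{G}$, factors $\textbf{U}_t$, with $\widetilde{\textbf{U}}_3=\textbf{F}_{n_3}\textbf{U}_3$) and once for $\mathscr{S}$ (core $\mathscr{H}$, factors $\textbf{V}_t$, with $\widetilde{\textbf{V}}_3=\textbf{F}_{n_3}\textbf{V}_3$). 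The result is a single multi-sum over $j$, $\alpha_1,\alpha_2,\alpha_3$ and $\beta_1,\beta_2,\beta_3$.

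The key step is to reorder this finite sum so that the contracted index $j$ appears only in the product $\textbf{U}_2(j,\alpha_2)\textbf{V}_1(j,\beta_1)$. Summing over $j$ then gives exactly $\textbf{M}(\alpha_2,\beta_1)=(\textbf{U}_2^\top\textbf{V}_1)(\alpha_2,\beta_1)$. After this step, $\textbf{U}_1(j_1,\alpha_1)$ and $\textbf{V}_2(j_2,\beta_2)$ are the only factors depending on the output indices $j_1$ and $j_2$. We pull them outside, and every remaining term is collected into
\begin{equation*}
\widetilde{\mathscr{P}}(\alpha_1,\beta_2,j_3)=\sum_{\alpha_2,\beta_1,\alpha_3,\beta_3}\mathscr{G}(\alpha_1,\alpha_2,\alpha_3)\,\textbf{M}(\alpha_2,\beta_1)\,\mathscr{H}(\beta_1,\beta_2,\beta_3)\,\widetilde{\textbf{U}}_3(j_3,\alpha_3)\,\widetilde{\textbf{V}}_3(j_3,\beta_3).
\end{equation*}
This yields the Tucker identity $\widetilde{\mathscr{C}}=\widetilde{\mathscr{P}}\times_1\textbf{U}_1\times_2\textbf{V}_2\times_3\textbf{I}_{n_3}$. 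In this representation the third index $\gamma_3=j_3$ is carried inside the core, so the third factor is the identity.

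Finally we return from the Fourier domain. Mode products along distinct modes commute, so
\begin{equation*}
\mathscr{C}=\widetilde{\mathscr{C}}\times_3\textbf{F}_{n_3}^{-1}=(\widetilde{\mathscr{P}}\times_3\textbf{F}_{n_3}^{-1})\times_1\textbf{U}_1\times_2\textbf{V}_2\times_3\textbf{I}_{n_3}.
\end{equation*}
This gives the claimed core $\mathscr{P}=\widetilde{\mathscr{P}}\times_3\textbf{F}_{n_3}^{-1}\in\mathbb{R}^{r_1\times s_2\times n_3}$ together with the factor matrices $\textbf{U}_1$, $\textbf{V}_2$ and $\textbf{I}_{n_3}$.

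The main obstacle is not algebraic but one of conventions. We must check that the Fourier diagonalization convention in \eqref{eq:fourier_def}, where $\textbf{T}_{j_3}$ is the $j_3$th slice of $\mathscr{T}\times_3\textbf{F}_{n_3}$ up to the $\sqrt{n_3}$ normalization, matches the one used to recover $\mathscr{C}$ through $\textbf{F}_{n_3}^{-1}$. Any scaling discrepancy would have to be absorbed consistently. We will handle this exactly as in \cref{coro:ttdtproduct} and \cref{prop:ttd_tprod}, treating the transform pair $(\textbf{F}_{n_3},\textbf{F}_{n_3}^{-1})$ as given. We also need that $\mathscr{P}$ is real even though $\widetilde{\mathscr{P}}$ is complex. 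This follows because $\mathscr{C}$ is real, $\textbf{U}_1$ and $\textbf{V}_2$ are real, and the Tucker representation with identity third factor determines $\mathscr{P}$ from $\mathscr{C}$ whenever $\textbf{U}_1$ and $\textbf{V}_2$ have full column rank. Without full column rank, the claim still holds in the sense of a (possibly complex) valid representation of $\mathscr{C}$.
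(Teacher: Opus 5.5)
Your argument matches the paper's proof step for step. You substitute \eqref{eq:htdbd} for both tensors into the slice-wise Fourier-domain product, sum the shared index $j$ into $\textbf{M}=\textbf{U}_2^\top\textbf{V}_1$, collect everything except $\textbf{U}_1(j_1,\alpha_1)\textbf{V}_2(j_2,\beta_2)$ into $\widetilde{\mathscr{P}}$, and undo the transform along mode~3, which commutes with the mode-1 and mode-2 products.

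The normalization caveat you raise, however, is a genuine error, not a convention that can be absorbed. The paper's proof makes the same identification you propose to adopt. With the paper's unitary $\textbf{F}_{n_3}$, the blocks in \eqref{eq:fourier_def} are $\sqrt{n_3}$ times the slices of $\mathscr{T}\times_3\textbf{F}_{n_3}$, while \eqref{eq:htdbd} returns the slices themselves. Combined with $\texttt{bcirc}(\mathscr{C})=\texttt{bcirc}(\mathscr{T})\,\texttt{bcirc}(\mathscr{S})$, this gives $(\mathscr{C}\times_3\textbf{F}_{n_3})(:,:,j_3)=\sqrt{n_3}\,\textbf{T}_{j_3}\textbf{S}_{j_3}$ for the blocks of \eqref{eq:htdbd}. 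So your first display is off by $\sqrt{n_3}$, and the stated core reproduces $\mathscr{C}/\sqrt{n_3}$; for $n_1=l=n_2=1$, $n_3=2$ you get exactly $\mathscr{C}/\sqrt{2}$. The formula holds verbatim only if $\textbf{F}_{n_3}$ is read as the unnormalized DFT $\sqrt{n_3}\,\textbf{F}_{n_3}$, which is the \texttt{fft}/\texttt{ifft} convention. With the unitary matrix you need $\mathscr{P}=\sqrt{n_3}\,\widetilde{\mathscr{P}}\times_3\textbf{F}_{n_3}^{-1}$. Deferring to \cref{prop:ttd_tprod} does not help, since it carries the same factor.

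Your realness argument is valid but needs no rank hypothesis, so the fallback to a possibly complex core is never needed. Let $\bar{\gamma}_3=1$ if $\gamma_3=1$ and $\bar{\gamma}_3=n_3-\gamma_3+2$ otherwise. Since $\textbf{U}_3$ and $\textbf{V}_3$ are real, $\widetilde{\textbf{U}}_3(\bar{\gamma}_3,:)=\overline{\widetilde{\textbf{U}}_3(\gamma_3,:)}$, and likewise for $\widetilde{\textbf{V}}_3$. Because $\mathscr{G}$, $\textbf{M}$ and $\mathscr{H}$ are real, $\widetilde{\mathscr{P}}(:,:,\bar{\gamma}_3)=\overline{\widetilde{\mathscr{P}}(:,:,\gamma_3)}$, and the inverse DFT of such a conjugate-symmetric sequence is real. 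Equivalently, each tube $\mathscr{P}(\alpha_1,\beta_2,:)$ is a real linear combination of circular convolutions of $\textbf{U}_3(:,\alpha_3)$ with $\textbf{V}_3(:,\beta_3)$. That form also makes the missing $\sqrt{n_3}$ factor visible.
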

\begin{proof}
The proof proceeds similarly to that for the TTD-based T-product.
    Denote by $\mathscr{C}=\mathscr{T}\star\mathscr{S}$  the T-product and by $\widetilde{\mathscr{C}}$  its representation in the Fourier domain. According to \eqref{eq:htdbd}, it follows that
    \begin{align*}
        \widetilde{\mathscr{C}}(j_1,j_2,j_3) &= \sum_{j=1}^{l}
\sum_{\alpha_1=1}^{r_1}
\sum_{\alpha_2=1}^{r_2}
\sum_{\alpha_3=1}^{r_3}
\sum_{\beta_1=1}^{s_1}
\sum_{\beta_2=1}^{s_2}
\sum_{\beta_3=1}^{s_3}
\mathscr{G}(\alpha_1,\alpha_2,\alpha_3)
\mathscr{H}(\beta_1,\beta_2,\beta_3)\\
&\times \textbf{U}_1(j_1,\alpha_1)
\textbf{U}_2(j,\alpha_2)
\textbf{V}_1(j,\beta_1)
\textbf{V}_2(j_2,\beta_2)\widetilde{\textbf{U}}_3(j_3,\alpha_3)
\widetilde{\textbf{V}}_3(j_3,\beta_3)\\
& = \sum_{\alpha_1=1}^{r_1}
\sum_{\beta_2=1}^{s_2}\widetilde{\mathscr{P}}(\alpha_1,\beta_2,j_3)
\textbf{U}_{1}(j_1,\alpha_1)
\textbf{V}_{2}(j_2,\beta_2).
    \end{align*}
Finally, applying the inverse Fourier transform to the third mode of $\widetilde{\mathscr{P}}$ yields the HTD/TD representation of $\mathscr{C}$.
\end{proof}

\begin{remark}
Assume $n_1 = n_2 = n_3 = n$, and  let $r$ and $s$ denote the maximum TT-ranks or hierarchical ranks of $\mathscr{T}$ and $\mathscr{S}$. The computational complexities of the TTD-based and HTD/TD-based T-product are about  $\mathcal{O}((r+s+rs)n\log n
+
r^2sl
+
r^2s^2n
)$ and $\mathcal{O}(
(r+s+rs)n\log n + rsl +
(r^3 + s^3 + r^2s + rs^2)n)$, respectively. By comparison, the standard block-circulant implementation of the T-product requires approximately 
$\mathcal{O}(l n^4)$ operations and the explicit storage of the full block-circulant matrix. 
\end{remark}


The TTD-based and HTD/TD-based formulations compute the T-product entirely in compressed form, avoiding full tensors and their block-circulant representations and thereby reducing computational cost and memory requirements when the decomposition ranks are small relative to the tensor dimensions.

\subsection{T-SVD} Similar to \cref{coro:ttdtproduct} for the T-product, we can  leverage tensor decomposition-based block diagonalization to compute T-SVD. 

\begin{corollary}
    Given a third-order tensor $\mathscr{T}\in\mathbb{R}^{n_1\times n_2\times n_3}$ in the TTD format with core tensors $\mathscr{G}_1$, $\mathscr{G}_2$, and $\mathscr{G}_3$, let $\textbf{T}_{j_3}=\textbf{U}_{j_3}\textbf{S}_{j_3}\textbf{V}_{j_3}^*$ be the matrix SVDs of the diagonal blocks $\textbf{T}_1,\textbf{T}_2,\dots,\textbf{T}_{n_3}$ computed from the TTD representation according to \eqref{eq:ttdbd}. Then the T-SVD of $\mathscr{T}$ is obtained by assembling these matrix SVD factors and applying the inverse Fourier transform.
\end{corollary}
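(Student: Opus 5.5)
The approach is to reduce the statement to the standard Fourier-domain construction of T-SVD recalled in the preliminaries, and to use \cref{prop:ttdbd} to certify that the blocks built from the TTD are exactly the blocks that construction uses.

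First, invoke \cref{prop:ttdbd}. It shows that the matrices $\textbf{T}_{j_3}$ assembled from $\mathscr{G}_1$, $\mathscr{G}_2$ and $\widetilde{\mathscr{G}}_3=\mathscr{G}_3\times_2\textbf{F}_{n_3}$ via \eqref{eq:ttdbd} coincide with the diagonal blocks in \eqref{eq:fourier_def}. Hence
\[
(\textbf{F}_{n_3}\otimes\textbf{I}_{n_1})\,\texttt{bcirc}(\mathscr{T})\,(\textbf{F}_{n_3}^*\otimes\textbf{I}_{n_2})=\texttt{blkdiag}(\textbf{T}_1,\dots,\textbf{T}_{n_3}).
\]

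Second, take the SVDs $\textbf{T}_{j_3}=\textbf{U}_{j_3}\textbf{S}_{j_3}\textbf{V}_{j_3}^*$. Define $\mathscr{U},\mathscr{S},\mathscr{V}$ by declaring their Fourier-domain blocks to be $\textbf{U}_{j_3}$, $\textbf{S}_{j_3}$ and $\textbf{V}_{j_3}$, and applying $\times_3\textbf{F}_{n_3}^{-1}$. The T-product in the Fourier domain is blockwise matrix multiplication; this is the fact already used in \cref{coro:ttdtproduct} and \cref{prop:ttd_tprod}. The T-transpose in the Fourier domain corresponds to blockwise conjugate transposition, since reversing slices $2,\dots,n_3$ and transposing matches conjugation of the DFT for real data. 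It follows that $\mathscr{U}\star\mathscr{S}\star\mathscr{V}^\top$ has Fourier blocks $\textbf{U}_{j_3}\textbf{S}_{j_3}\textbf{V}_{j_3}^*=\textbf{T}_{j_3}$. Since block diagonalization is invertible, this product equals $\mathscr{T}$.

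Third, check the structural properties.
\begin{itemize}
\item $\mathscr{U}$ is T-orthogonal: its Fourier blocks are unitary, so $\mathscr{U}\star\mathscr{U}^\top$ has identity blocks, which is the Fourier image of $\mathscr{I}$. The same argument applies to $\mathscr{V}$.
\item $\mathscr{S}$ is F-rectangular diagonal: the inverse DFT along mode three acts entrywise across tubes, so off-diagonal tubes that are zero in every block remain zero.
\end{itemize}

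The main obstacle is realness. The factors must be real tensors, which requires the blocks to satisfy the conjugate symmetry $\textbf{T}_{n_3-j_3+2}=\overline{\textbf{T}_{j_3}}$. This holds here because $\mathscr{G}_1,\mathscr{G}_2$ are real and $\widetilde{\mathscr{G}}_3$ inherits the DFT conjugate symmetry in $j_3$. The SVDs must then be chosen compatibly: compute them for $j_3\le\lceil (n_3+1)/2\rceil$ and set the remaining factors to the complex conjugates, taking real SVDs for the real blocks $j_3=1$ and, when $n_3$ is even, $j_3=n_3/2+1$. With this choice, the inverse transforms of $\{\textbf{U}_{j_3}\}$, $\{\textbf{S}_{j_3}\}$ and $\{\textbf{V}_{j_3}\}$ are real, which completes the argument.
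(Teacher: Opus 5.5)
Your proposal is correct and takes the same route as the paper, whose proof is a one-line appeal to \cref{prop:ttdbd} together with the Fourier-domain definition of the T-SVD; you additionally check reconstruction, T-orthogonality and F-diagonality blockwise, and your conjugate-symmetric choice of SVDs (needed so that $\mathscr{V}$ is real and the plain T-transpose acts as blockwise conjugate transposition) is essentially the argument the paper gives later in its remark on real-valued representations. One caveat comes from \cref{prop:ttdbd} rather than from your reasoning: with the unitary $\textbf{F}_{n_3}$, the blocks in \eqref{eq:fourier_def} are $\sqrt{n_3}$ times the frontal slices of $\mathscr{T}\times_3\textbf{F}_{n_3}$, so both the forward map in \eqref{eq:ttdbd} and the inverse map you apply should use the unnormalized DFT $\sqrt{n_3}\,\textbf{F}_{n_3}$; otherwise the reconstruction and T-orthogonality hold only up to a factor of $n_3$.
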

\begin{proof}
    The result follows directly from the definition of T-SVD and the block diagonalization established in \cref{prop:ttdbd}.
\end{proof}

Analogously,  T-SVD can also be computed using the block-diagonal representation associated with HTD/TD as presented in \cref{prop:htdbd}. Furthermore, the factor tensors of the T-SVD can be constructed directly within the TTD or HTD/TD formats by operating on the corresponding core representations, thereby avoiding the explicit formation of the full dense tensors in either the original or transform domains. For convenience, we permit the TTD and HTD/TD representations of the factor tensors of T-SVD to be complex-valued, even though equivalent real-valued representations can be readily obtained via suitable transformations.

\begin{proposition} [TTD-based T-SVD]
\label{prop:ttd_tsvd}
    For a third-order tensor $\mathscr{T}\in\mathbb{R}^{n_1\times n_2\times n_3}$ in the TTD format with core tensors $\mathscr{G}_1$, $\mathscr{G}_2$, and $\mathscr{G}_3$ and TT-ranks $\{1,r_1,r_2,1\}$, let $\textbf{G}_1=\textbf{Q}_1\textbf{R}_1$ and $\textbf{G}_2=\textbf{Q}_2\textbf{R}_2$ be the QR factorizations of the matricization $\textbf{G}_1\in\mathbb{R}^{n_1\times r_1}$ of $\mathscr{G}_1$ and the mode-2 matricization $\textbf{G}_2\in\mathbb{R}^{n_2\times r_1r_2}$ of $\mathscr{G}_2$, respectively. Define $\textbf{M}_{\beta}=\textbf{R}_1\textbf{B}_{\beta}$ with
\begin{align*}
\textbf{B}_{\beta}(\alpha_1,\eta)
=
\sum_{\alpha_2=1}^{r_2}
\textbf{R}_2(\eta, \phi(\alpha_1, \alpha_2))
\widetilde{\mathscr{G}}_3(\alpha_2,\beta,1),
\end{align*} where $\phi(\alpha_1, \alpha_2) = \alpha_1+(\alpha_2-1)r_1$ and $\widetilde{\mathscr{G}}_3=\mathscr{G}_3\times_2\textbf{F}_{n_3}$.
Then the TTDs of the T-SVD factor tensors $\mathscr{U}$, $\mathscr{S}$, and $\mathscr{V}$ are computed as
\begin{align*}
    \mathscr{U}&: \ 
    \mathscr{P}_1(1,j_1,\alpha)=\textbf{Q}_1(j_1,\alpha), \quad
    \mathscr{P}_2(\alpha,\xi,\beta)=\textbf{U}_{\beta}(\alpha,\xi), \quad
    \mathscr{P}_3(\beta,j_3,1)=\textbf{F}_{n_3}^{-1}(j_3,\beta),\\
    \mathscr{S}&: \ 
    \mathscr{Q}_1(1,\xi,\mu)=\textbf{I}_{s}(\xi,\mu), \quad
    \mathscr{Q}_2(\mu,\zeta,\beta)=\textbf{S}_{\beta}(\mu,\zeta), \quad
    \mathscr{Q}_3(\beta,j_3,1)=\textbf{F}_{n_3}^{-1}(j_3,\beta),\\
    \mathscr{V}&: \ 
    \mathscr{R}_1(1,j_2,\eta)=\textbf{Q}_2(j_2,\eta), \quad
    \mathscr{R}_2(\eta,\zeta,\beta)=\textbf{V}_{\beta}(\eta,\zeta), \quad
    \mathscr{R}_3(\beta,j_3,1)=\textbf{F}_{n_3}^{-1}(j_3,\beta),
\end{align*}
where $\textbf{U}_\beta$, $\textbf{S}_\beta$, and $\textbf{V}_\beta$ are derived from the compact matrix SVD of $\textbf{M}_\beta$, i.e., $\textbf{M}_\beta=\textbf{U}_\beta\textbf{S}_\beta\textbf{V}_\beta^{*}$, and $s=\max_{1\leq \beta\leq n_3}\text{rank}(\textbf{M}_\beta)$, for $\alpha=1, 2, \dots,r_1$, $\beta=1,2,\dots,n_3$,
$\xi,\zeta,\mu=1,2,\dots,s$, and $\eta=1,2, \dots, \min\{n_2,r_1r_2\}$. If some $\textbf{M}_\beta$ has rank smaller than $s$, we extend its SVD factors to size $s$ by adding extra columns and setting the corresponding singular values to zero.
\end{proposition}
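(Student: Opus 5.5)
The plan is to reduce everything to the Fourier-domain diagonal blocks given by \cref{prop:ttdbd}. There, each block $\textbf{T}_\beta$ turns out to factor as $\textbf{Q}_1\textbf{M}_\beta\textbf{Q}_2^\top$, where $\textbf{Q}_1,\textbf{Q}_2$ have orthonormal columns. The SVD of the small matrix $\textbf{M}_\beta$ then lifts to a compact SVD of $\textbf{T}_\beta$. Reading off the resulting slices and applying the inverse DFT along the third mode should produce exactly the stated cores.

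\emph{Step 1: factor the diagonal blocks.} By \eqref{eq:ttdbd}, $\textbf{T}_\beta(j_1,j_2)=\sum_{\alpha_1,\alpha_2}\textbf{G}_1(j_1,\alpha_1)\,\textbf{G}_2(j_2,\phi(\alpha_1,\alpha_2))\,\widetilde{\mathscr{G}}_3(\alpha_2,\beta,1)$. I substitute $\textbf{G}_1=\textbf{Q}_1\textbf{R}_1$ and $\textbf{G}_2(j_2,\phi)=\sum_\eta \textbf{Q}_2(j_2,\eta)\textbf{R}_2(\eta,\phi)$, and then regroup the sum over $\alpha_2$ into $\textbf{B}_\beta(\alpha_1,\eta)$. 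This gives
\[
\textbf{T}_\beta=\textbf{Q}_1\textbf{R}_1\textbf{B}_\beta\textbf{Q}_2^\top=\textbf{Q}_1\textbf{M}_\beta\textbf{Q}_2^{*},
\]
where $\textbf{Q}_2^\top=\textbf{Q}_2^{*}$ because $\textbf{Q}_2$ is real. This step is pure index bookkeeping.

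\emph{Step 2: lift the SVD of $\textbf{M}_\beta$.} I insert $\textbf{M}_\beta=\textbf{U}_\beta\textbf{S}_\beta\textbf{V}_\beta^*$, padded to size $s$ as described in the statement, to get
\[
\textbf{T}_\beta=(\textbf{Q}_1\textbf{U}_\beta)\,\textbf{S}_\beta\,(\textbf{Q}_2\textbf{V}_\beta)^*.
\]
Since $\textbf{Q}_1$ and $\textbf{Q}_2$ have orthonormal columns, $\textbf{Q}_1\textbf{U}_\beta$ and $\textbf{Q}_2\textbf{V}_\beta$ do too, and $\textbf{S}_\beta$ is real, nonnegative and diagonal. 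So this is a valid compact SVD of every $\textbf{T}_\beta$ with a common size $s$. The padding keeps all blocks the same dimension, which is what the slice-wise assembly needs. One caveat: the padded columns of $\textbf{U}_\beta$ and $\textbf{V}_\beta$ must be chosen orthonormal to the existing ones, for example by completing within $\mathbb{C}^{r_1}$ and $\mathbb{C}^{\min\{n_2,r_1r_2\}}$. This is possible because $s\le\min\{r_1,\min\{n_2,r_1r_2\}\}$.

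\emph{Step 3: assemble the factor tensors and invert the transform.} By the Fourier-domain characterization of T-SVD, the factors are obtained by taking the $\beta$th Fourier-domain frontal slices to be $\textbf{Q}_1\textbf{U}_\beta$, $\textbf{S}_\beta$ and $\textbf{Q}_2\textbf{V}_\beta$, and then applying $\textbf{F}_{n_3}^{-1}$ along mode 3. Here I use the standard fact that T-transpose corresponds to slice-wise conjugate transpose in the Fourier domain. For $\mathscr{U}$ this gives
\[
\mathscr{U}(j_1,\xi,j_3)=\sum_{\beta}\sum_{\alpha}\textbf{Q}_1(j_1,\alpha)\,\textbf{U}_\beta(\alpha,\xi)\,\textbf{F}_{n_3}^{-1}(j_3,\beta),
\]
which is precisely the TTD with cores $\mathscr{P}_1,\mathscr{P}_2,\mathscr{P}_3$ and TT-ranks $\{1,r_1,n_3,1\}$. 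Here $\beta$ plays the role of the second TT-rank index. The argument for $\mathscr{V}$ is identical with $\textbf{Q}_2$ and $\textbf{V}_\beta$. For $\mathscr{S}$, the first core is taken to be the identity $\textbf{I}_s$, so that the sum over $\mu$ simply copies $\textbf{S}_\beta(\xi,\zeta)$.

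\emph{Main obstacle.} The algebra is routine; the delicate part is justifying that these slice-wise factors really form a (compact) T-SVD. Concretely, I need $\mathscr{U}^\top\star\mathscr{U}=\mathscr{I}$ and $\mathscr{V}^\top\star\mathscr{V}=\mathscr{I}$ in the compact, T-semi-orthogonal sense, which I will check slice-wise in the Fourier domain. I also need to handle the zero-padding consistently across $\beta$. I will state explicitly that, as the paper permits, the factor tensors may be complex-valued, so no conjugate-symmetry enforcement across $\beta$ is required.
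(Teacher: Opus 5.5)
Your proposal is correct and follows the paper's proof essentially step for step: you write $\textbf{T}_\beta=\textbf{Q}_1\textbf{M}_\beta\textbf{Q}_2^\top$ from \cref{prop:ttdbd} and the two QR factorizations, lift the SVD of $\textbf{M}_\beta$ using the orthonormal columns of $\textbf{Q}_1$ and $\textbf{Q}_2$, and read off the cores after applying $\textbf{F}_{n_3}^{-1}$ along mode~3. Your remarks on orthonormal padding and on the compact (semi-orthogonal) meaning of the factors are refinements the paper leaves implicit.

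One caution for the slice-wise check you plan at the end, which affects the paper's argument equally. Since $\textbf{F}_{n_3}$ is unitary, the blocks produced by \cref{prop:ttdbd} are $n_3^{-1/2}$ times those in \eqref{eq:fourier_def}. With the cores exactly as stated you will therefore find $\mathscr{U}^\top\star\mathscr{U}=n_3\mathscr{I}$ and $\mathscr{U}\star\mathscr{S}\star\mathscr{V}^\top=n_3\mathscr{T}$. To get exact identities, either use the unnormalized DFT pair $\sqrt{n_3}\,\textbf{F}_{n_3}$, $(\sqrt{n_3}\,\textbf{F}_{n_3})^{-1}$ throughout, or rescale $\mathscr{U}$ and $\mathscr{V}$ by $n_3^{-1/2}$.
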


\begin{proof}
    According to \cref{prop:ttdbd}, the $\beta$th Fourier block diagonal matrix of $\mathscr{T}$ can be written as $\textbf{T}_\beta(j_1,j_2)
=
\sum_{\alpha_1=1}^{r_1}
\sum_{\alpha_2=1}^{r_2}
\textbf{G}_1(j_1,\alpha_1)
\mathscr{G}_2(\alpha_1,j_2,\alpha_2)
\widetilde{\mathscr{G}}_3(\alpha_2,\beta,1).$
The QR factorization of $\textbf{G}_2$ gives $\mathscr{G}_2(\alpha_1,j_2,\alpha_2)
=
\sum_{\eta=1}^{q}
\textbf{Q}_2(j_2,\eta)
\textbf{R}_2(\eta,\alpha_1+(\alpha_2-1)r_1).$
By the definition of $\textbf{B}_\beta$, it follows that $\textbf{T}_\beta
=
\textbf{G}_1\textbf{B}_\beta\textbf{Q}_2^\top.$
Applying the QR factorization $\textbf{G}_1=\textbf{Q}_1\textbf{R}_1$ yields
\begin{align*}
\textbf{T}_\beta
=
\textbf{Q}_1(\textbf{R}_1\textbf{B}_\beta)\textbf{Q}_2^\top
=
\textbf{Q}_1\textbf{M}_\beta\textbf{Q}_2^\top.
\end{align*}
Taking the compact SVD of the reduced matrix $\textbf{M}_\beta
=
\textbf{U}_\beta
\textbf{S}_\beta
\textbf{V}_\beta^{*}$ yields 
\begin{align*}
\textbf{T}_\beta
=
(\textbf{Q}_1\textbf{U}_\beta)
\textbf{S}_\beta
(\textbf{Q}_2\textbf{V}_\beta)^{*}.
\end{align*}
Since $\textbf{Q}_1$, $\textbf{Q}_2$, $\textbf{U}_\beta$, and $\textbf{V}_\beta$ have orthonormal columns, $\textbf{Q}_1\textbf{U}_\beta$ and $\textbf{Q}_2\textbf{V}_\beta$ also have orthonormal columns. Therefore, this yields a valid compact SVD of the Fourier block diagonal matrix $\textbf{T}_\beta$. Stacking these factorizations over $\beta=1,2,\dots,n_3$ yields the Fourier-domain tensors whose slices are $(\textbf{Q}_1\textbf{U}_\beta)$, $\textbf{S}_\beta$, and $(\textbf{Q}_2\textbf{V}_\beta)$. Applying the inverse Fourier transform along the third mode reconstructs the corresponding TT core tensors. 
For example, for $\mathscr{U}$, we obtain
\begin{align*}
    \mathscr{U}(j_1,\xi,j_3) = \sum_{\beta=1}^{n_3}
(\textbf{Q}_1\textbf{U}_{\beta})(j_1,\xi)
\textbf{F}_{n_3}^{-1}(j_3,\beta).
\end{align*}
This is equivalently written in TTD form as
\begin{equation*}
    \mathscr{U}(j_1,\xi,j_3) = \sum_{\beta=1}^{n_3}
\sum_{\alpha=1}^{r_1}
\mathscr{P}_1(1,j_1,\alpha)
\mathscr{P}_2(\alpha,\xi,\beta)
\mathscr{P}_3(\beta,j_3,1),
\end{equation*}
with TT-ranks $\{1,r_1,n_3,1\}$.
The constructions for $\mathscr{S}$ and $\mathscr{V}$ follow identically by replacing $\textbf{Q}_1\textbf{U}_\beta$ with $\textbf{S}_\beta$ and $\textbf{Q}_2\textbf{V}_\beta$, respectively.
\end{proof}

\begin{proposition} [HTD/TD-based T-SVD]
\label{prop:htd_tsvd}
Let $\mathscr{T}\in\mathbb{R}^{n_1\times n_2\times n_3}$ be a third-order
tensor in the HTD/TD format with core tensor
    $\mathscr{G}\in\mathbb{R}^{r_1\times r_2\times r_3}$ and factor matrices $\textbf{U}_t\in\mathbb{R}^{n_t\times r_t}$, $t= 1,2,3$. Assume $\textbf{U}_1$ and $\textbf{U}_2$ have orthonormal columns. Define $\widetilde{\textbf{U}}_3
=
\textbf{F}_{n_3}\textbf{U}_3$ and 
\begin{equation*}
    \textbf{G}_{j_3}
=
\sum_{\alpha_3=1}^{r_3}
\mathscr{G}(:,:, \alpha_3)
\widetilde{\textbf{U}}_3(j_3,\alpha_3)
\in \mathbb{C}^{r_1\times r_2}
\end{equation*}
for $j_3=1,2,\dots,n_3$. Suppose that the compact matrix SVDs of $\textbf{G}_{j_3}$ are given by $\textbf{G}_{j_3}=\textbf{P}_{j_3}\textbf{Q}_{j_3}\textbf{R}_{j_3}^{*}$ with $s=\max_{1\leq j_3\leq n_3}\text{rank}(\textbf{G}_{j_3})$. Then the HTD/TD representations of the T-SVD factor tensors $\mathscr{U}$, $\mathscr{S}$, and $\mathscr{V}$ are computed as
\begin{align*}
    \mathscr{U} &= \widetilde{\mathscr{P}}\times_1\textbf{U}_1\times_2\textbf{I}_{s}\times_3\textbf{F}_{n_3}^{-1}, \ \mathscr{S} = \widetilde{\mathscr{Q}}\times_1\textbf{I}_s\times_2\textbf{I}_s\times_3\textbf{F}_{n_3}^{-1},\\ 
    \mathscr{V} &= \widetilde{\mathscr{R}}\times_1\textbf{U}_2\times_2\textbf{I}_s\times_3\textbf{F}_{n_3}^{-1},
\end{align*}
where $\widetilde{\mathscr{P}}(:,:,j_3)=\textbf{P}_{j_3}$, $\widetilde{\mathscr{Q}}(:,:,j_3)=\textbf{Q}_{j_3}$, and $\widetilde{\mathscr{R}}(:,:,j_3)=\textbf{R}_{j_3}$. If some $\textbf{G}_{j_3}$ has rank smaller than $s$, we extend its SVD
factors to size $s$ by adding extra columns and setting the corresponding
singular values to zero.
\end{proposition}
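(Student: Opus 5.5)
The plan follows the proof of \cref{prop:ttd_tsvd}. The orthonormality of $\textbf{U}_1$ and $\textbf{U}_2$ plays the role that the QR factors $\textbf{Q}_1$ and $\textbf{Q}_2$ played there. I would carry it out in three steps.

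\emph{Step 1: factor each Fourier block.} By \cref{prop:htdbd}, the $j_3$th Fourier diagonal block is
\begin{equation*}
\textbf{T}_{j_3}(j_1,j_2)=\sum_{\alpha_1,\alpha_2}\textbf{U}_1(j_1,\alpha_1)\Big(\sum_{\alpha_3}\mathscr{G}(\alpha_1,\alpha_2,\alpha_3)\widetilde{\textbf{U}}_3(j_3,\alpha_3)\Big)\textbf{U}_2(j_2,\alpha_2).
\end{equation*}
In matrix form this reads $\textbf{T}_{j_3}=\textbf{U}_1\textbf{G}_{j_3}\textbf{U}_2^\top$, with $\textbf{G}_{j_3}$ as defined in the statement.

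\emph{Step 2: obtain an SVD of each block.} Substitute the compact SVD $\textbf{G}_{j_3}=\textbf{P}_{j_3}\textbf{Q}_{j_3}\textbf{R}_{j_3}^*$, padded to size $s$ as stated, to get
\begin{equation*}
\textbf{T}_{j_3}=(\textbf{U}_1\textbf{P}_{j_3})\textbf{Q}_{j_3}(\textbf{U}_2\textbf{R}_{j_3})^*.
\end{equation*}
Since $\textbf{U}_2$ is real, $\textbf{U}_2^\top=\textbf{U}_2^*$. Because $\textbf{U}_1,\textbf{P}_{j_3}$ and $\textbf{U}_2,\textbf{R}_{j_3}$ have orthonormal columns, the products $\textbf{U}_1\textbf{P}_{j_3}$ and $\textbf{U}_2\textbf{R}_{j_3}$ also have orthonormal columns. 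Hence this is a valid compact SVD of $\textbf{T}_{j_3}$. One point needs care: when padding to size $s$, the added columns of $\textbf{P}_{j_3}$ and $\textbf{R}_{j_3}$ must be chosen orthonormal within $\mathbb{C}^{r_1}$ and $\mathbb{C}^{r_2}$. This requires $s\le\min\{r_1,r_2\}$, which holds since $s$ is the maximal rank of an $r_1\times r_2$ matrix. The zero singular values then make the padding harmless.

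\emph{Step 3: return to the original domain.} By the construction of T-SVD recalled in the preliminaries, $\mathscr{U}$, $\mathscr{S}$, $\mathscr{V}$ are obtained by applying $\textbf{F}_{n_3}^{-1}$ along mode $3$ to the Fourier-domain tensors with frontal slices $\textbf{U}_1\textbf{P}_{j_3}$, $\textbf{Q}_{j_3}$, $\textbf{U}_2\textbf{R}_{j_3}$. Write the first of these as $\widetilde{\mathscr{P}}\times_1\textbf{U}_1$. Mode products along distinct modes commute, and $\times_2\textbf{I}_s$ is trivial, so
\begin{equation*}
\mathscr{U}=\widetilde{\mathscr{P}}\times_1\textbf{U}_1\times_2\textbf{I}_s\times_3\textbf{F}_{n_3}^{-1}.
\end{equation*}
This is a Tucker representation with core $\widetilde{\mathscr{P}}$ and factors $\textbf{U}_1$, $\textbf{I}_s$, $\textbf{F}_{n_3}^{-1}$. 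The same argument gives the formulas for $\mathscr{S}$ and $\mathscr{V}$.

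The main obstacle is bookkeeping rather than depth. First, one must check that the conjugation convention is consistent: $\mathscr{V}^\top$ must correspond to $\textbf{R}_{j_3}^*$ slice-wise in the Fourier domain. This follows from the fact that the T-transpose corresponds to the conjugate transpose of the Fourier blocks. Second, one must confirm that the padding produces F-diagonal $\mathscr{S}$ and T-orthogonal (or partially orthogonal) $\mathscr{U}$ and $\mathscr{V}$ of consistent size $s$. I would treat both points the same way as in \cref{prop:ttd_tsvd}, where the complex-valued representation is allowed.
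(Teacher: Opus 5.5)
Your proposal is correct and follows the paper's proof step for step. You write $\textbf{T}_{j_3}=\textbf{U}_1\textbf{G}_{j_3}\textbf{U}_2^\top$ via \cref{prop:htdbd}, substitute the compact SVD of $\textbf{G}_{j_3}$, use that the products keep orthonormal columns, stack the slices as $\widetilde{\mathscr{P}}\times_1\textbf{U}_1$, $\widetilde{\mathscr{Q}}$, $\widetilde{\mathscr{R}}\times_1\textbf{U}_2$, and apply $\textbf{F}_{n_3}^{-1}$ along mode $3$. Your remarks on padding ($s\le\min\{r_1,r_2\}$) and on the transpose convention cover points the paper leaves implicit. One correction: the T-transpose gives slice-wise conjugate transposes in the Fourier domain only for real tensors, so that step needs the conjugate-symmetric choice of SVD factors described after the proposition, not merely allowing complex-valued representations.
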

\begin{proof}
    According to \cref{prop:htdbd}, the $j_3$th Fourier block diagonal matrix of $\mathscr{T}$ can be written as $\textbf{T}_{j_3}=\textbf{U}_1\textbf{G}_{j_3}\textbf{U}_2^\top$.
    Substituting the compact SVD of $\textbf{G}_{j_3}$ yields
    \begin{equation*}
        \textbf{T}_{j_3}
=
\textbf{U}_1
\textbf{P}_{j_3}
\textbf{Q}_{j_3}
\textbf{R}_{j_3}^*
\textbf{U}_2^\top
=
(\textbf{U}_1\textbf{P}_{j_3})
\textbf{Q}_{j_3}
(\textbf{U}_2\textbf{R}_{j_3})^*.
    \end{equation*}
Since $\textbf{U}_1$ and $\textbf{U}_2$ have orthonormal columns, the factor matrices $\textbf{U}_1\textbf{P}_{j_3}$ and $\textbf{U}_2\textbf{R}_{j_3}$ have orthonormal columns. Therefore, this defines a valid matrix SVD of $\textbf{T}_{j_3}$. By the definition of the T-SVD, the Fourier-domain frontal slices of the factor tensors $\mathscr{U}$, $\mathscr{S}$, and $\mathscr{V}$ are respectively given by $\textbf{U}_1\textbf{P}_{j_3}$, $\textbf{Q}_{j_3}$, and $\textbf{U}_2\textbf{R}_{j_3}$ for $j_3=1,2,\dots,n_3$. Stacking these slices yields the Fourier-domain tensors $\widetilde{\mathscr{P}}\times_1\textbf{U}_1$, $\widetilde{\mathscr{Q}}$, and $\widetilde{\mathscr{R}}\times_1\textbf{U}_2$. Applying the inverse Fourier transform along the third mode gives  the HTD/TD representations of the T-SVD factor tensors. 
\end{proof}

The factor tensors/matrices in \cref{prop:ttd_tsvd} and \cref{prop:htd_tsvd} may be complex-valued, but equivalent real-valued representations can be obtained by grouping conjugate frequency pairs. The Fourier-domain frontal slices of $\mathscr{T}$ satisfy $\textbf{T}_{\bar{j}_3}=\overline{\textbf{T}}_{j_3}$, where $\bar{j}_3=1$ for $j_3=1$ and
$\bar{j}_3=n_3-j_3+2$ otherwise. Thus, the left and right singular factors at $\bar{j}_3$ can be chosen as the complex conjugates of those at $j_3$ with the same singular values. In the inverse Fourier transform, each conjugate pair can be transformed into real and imaginary linear combinations, yielding real-valued cores without changing the T-SVD factor tensors or the stated ranks. The self-conjugate frequencies $j_3=1$ and, when $n_3$ is even, $j_3=n_3/2+1$, are real-valued by the same symmetry relation. Therefore, the T-SVD factor tensors admit equivalent real-valued TTD or HTD/TD representations after grouping all conjugate frequency pairs.

\begin{remark}
Assume $n_1 = n_2 = n_3 = n$, and let $r$ denote the maximum TT-rank or hierarchical rank. For the TTD-based T-SVD, the main computations consist of three parts, namely transforming the third core tensor, computing the QR factorization of the reshaped first core tensor,
and performing compact SVDs of $\textbf{M}_{j_3}$ for $j_3=1,2,\dots,n_3$. The resulting cost is thus approximately $\mathcal{O}(rn\log n+nr^2+ nr^2q + n\min \{r, q\}^2 \max \{r, q\})$, where $q \leq \min\{r^2, n_2\}$.
For the HTD/TD-based T-SVD, the dominant computations consist of transforming $\mathbf{U}_3$, constructing the reduced matrices $\mathbf{G}_{j_3}$, and performing compact SVDs of $\mathbf{G}_{j_3}$ for $j_3=1,2,\ldots,n_3$, yielding an overall complexity of $\mathcal{O}(rn\log n+nr^3).$ By comparison, the computational cost of the definition-based T-SVD is dominated by the Fourier transform of $\mathscr{T}$ and the matrix SVDs of the Fourier-domain frontal slices, which cost approximately $\mathcal{O}(n^3\log n+n^4)$.
\end{remark}


In summary, the proposed TTD- and HTD/TD-based formulations compute the T-SVD directly from the corresponding decomposition factors and obtain the resulting factor tensors in the TTD or HTD/TD format, without explicitly reconstructing either the full tensor or its block-diagonal representation. When the TT-ranks or hierarchical ranks are small relative to the tensor dimensions, the proposed formulations achieve substantial computational savings over the definition-based T-SVD.


\section{Generalization to higher-order tensors} 
\label{sec:higherorderoperation}
This section generalizes the preceding results to the higher-order setting.  For a $k$th-order tensor
$\mathscr{T}\in\mathbb{R}^{n_1\times n_2\times\cdots\times n_k}$, the
definition-based higher-order block diagonalization first applies separable
discrete Fourier transforms along modes $3,4,\dots,k$, i.e., 
\begin{align*}
\widetilde{\mathscr{T}}
=
\mathscr{T}
\times_3\textbf{F}_{n_3}
\times_4\textbf{F}_{n_4} \times_5
\cdots
\times_k\textbf{F}_{n_k}.
\end{align*}
For each frequency tuple
$\boldsymbol{j}=(j_3,j_4,\dots,j_k)$, where
$j_t=1,2,\dots,n_t$ for $t=3,4,\dots,k$, the corresponding Fourier-domain
block is $\textbf{T}_{\boldsymbol{j}}
=
\widetilde{\mathscr{T}}(:,:,j_3,j_4,\dots,j_k)
\in\mathbb{C}^{n_1\times n_2}.$
Instead of explicitly constructing the full tensor and its Fourier transform, the following propositions derive  higher-order block diagonalization, the higher-order tensor-tensor product, and higher-order SVD directly from the TTD and HTD representations by operating on their core tensors, transfer tensors, or factor matrices.


\begin{proposition}[TTD-based higher-order block diagonalization]
\label{thm:ttd_kth_block_diag}
Let $\mathscr{T}\in\mathbb{R}^{n_1\times n_2\times n_3\times\cdots\times n_k}$ be a tensor in the TTD format with core tensors
$\mathscr{G}_t\in\mathbb{R}^{r_{t-1}\times n_t\times r_t}$,
$t=1,2, \dots,k$. Define $\widetilde{\mathscr{G}}_t
=
\mathscr{G}_t\times_2 \textbf{F}_{n_t}
\in
\mathbb{C}^{r_{t-1}\times n_t\times r_t}$ for 
$t=3, 4, \dots,k$.
For each frequency tuple
$\boldsymbol{j}=(j_3,j_4, \dots,j_k)$, the Fourier-domain diagonal block
$\textbf{T}_{\boldsymbol{j}}\in\mathbb{C}^{n_1\times n_2}$ is then computed as
\begin{align*}
\textbf{T}_{\boldsymbol{j}}(j_1,j_2)
=
\sum_{\alpha_1=1}^{r_1}\sum_{\alpha_2=1}^{r_2}
\cdots
\sum_{\alpha_{k}=1}^{r_{k}}
\mathscr{G}_1(1,j_1,\alpha_1)
\mathscr{G}_2(\alpha_1,j_2,\alpha_2)
\prod_{t=3}^{k}
\widetilde{\mathscr{G}}_t(\alpha_{t-1},j_t,\alpha_t).
\end{align*}
\end{proposition}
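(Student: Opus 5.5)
The plan is to mirror the proof of \cref{prop:ttdbd}, now handling several transformed modes at once. By definition, $\textbf{T}_{\boldsymbol{j}}(j_1,j_2)=\widetilde{\mathscr{T}}(j_1,j_2,j_3,\dots,j_k)$, where $\widetilde{\mathscr{T}}=\mathscr{T}\times_3\textbf{F}_{n_3}\times_4\cdots\times_k\textbf{F}_{n_k}$. Writing out the mode products entrywise, I would express this entry as a $(k-2)$-fold sum over original indices $i_3,\dots,i_k$ of $\mathscr{T}(j_1,j_2,i_3,\dots,i_k)\prod_{t=3}^k\textbf{F}_{n_t}(j_t,i_t)$. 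This uses the definition of tensor-matrix multiplication together with the fact that mode products along distinct modes commute and compose entrywise as a product of the matrix entries.

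Next, I would substitute the TTD of $\mathscr{T}$ into this expression. Each summand then becomes
\[
\mathscr{G}_1(1,j_1,\alpha_1)\,\mathscr{G}_2(\alpha_1,j_2,\alpha_2)\prod_{t=3}^k\mathscr{G}_t(\alpha_{t-1},i_t,\alpha_t)\,\textbf{F}_{n_t}(j_t,i_t),
\]
summed over all rank indices $\alpha_1,\dots,\alpha_k$ (with $\alpha_0=\alpha_k=1$) and all $i_3,\dots,i_k$. All sums are finite, so I can interchange them and move the $i$-sums inside the rank sums.

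The key observation is that each $i_t$ appears only in the single factor $\mathscr{G}_t(\alpha_{t-1},i_t,\alpha_t)\textbf{F}_{n_t}(j_t,i_t)$. The sum over $i_3,\dots,i_k$ of the product therefore factorizes into a product of individual sums, $\prod_{t=3}^k\sum_{i_t}\textbf{F}_{n_t}(j_t,i_t)\mathscr{G}_t(\alpha_{t-1},i_t,\alpha_t)=\prod_{t=3}^k\widetilde{\mathscr{G}}_t(\alpha_{t-1},j_t,\alpha_t)$, by the definition $\widetilde{\mathscr{G}}_t=\mathscr{G}_t\times_2\textbf{F}_{n_t}$. Substituting this back gives the stated formula.

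The only delicate point is index bookkeeping. I need to check that the mode-$t$ product on $\mathscr{T}$ corresponds exactly to a mode-$2$ product on the core $\mathscr{G}_t$, with the same orientation $\textbf{F}_{n_t}(j_t,i_t)$ as in the tensor-matrix multiplication definition. I also need to check that the boundary convention $r_k=1$ makes the last sum trivial. Beyond this, the argument is a straightforward generalization of the third-order case, with no real obstacle.
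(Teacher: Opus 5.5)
Your proposal is correct and follows the same argument as the paper, whose proof simply observes that each mode-$t$ Fourier multiplication acts only on the physical index of the $t$th core, so $\mathscr{T}\times_3\textbf{F}_{n_3}\cdots\times_k\textbf{F}_{n_k}$ is the TTD with cores $\mathscr{G}_1,\mathscr{G}_2,\widetilde{\mathscr{G}}_3,\dots,\widetilde{\mathscr{G}}_k$, and then evaluates it at $\boldsymbol{j}$. Your entrywise expansion, with the $i_t$-sums factorized into $\prod_{t=3}^k\widetilde{\mathscr{G}}_t(\alpha_{t-1},j_t,\alpha_t)$, is the explicit verification of that observation, and your index orientation $\textbf{F}_{n_t}(j_t,i_t)$ correctly matches the paper's definition of the mode product.
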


\begin{proof}
The proof follows the same argument as \cref{prop:ttdbd}. The higher-order block diagonalization applies the discrete Fourier transforms along modes $3, 4, \dots,k$. Since mode-$t$ multiplication acts only on the physical index of the $t$th  core tensor, the transformed tensor is represented by the original core tensors $\mathscr{G}_1,\mathscr{G}_2$ and the transformed core tensors $\widetilde{\mathscr{G}}_t$ for $t=3, 4, \dots,k$. Evaluating this TTD representation at the frequency tuple $\boldsymbol{j}$ gives the stated expression for $\textbf{T}_{\boldsymbol{j}}$.
\end{proof}



\begin{proposition}[HTD-based higher-order block diagonalization]
\label{thm:htd_kth_block_diag}
Let $\mathscr{T}\in\mathbb{R}^{n_1\times n_2\times n_3\times\cdots\times n_k}$
be a tensor in the HTD format with respect to a dimension tree $\mathcal{T}$ on
$D=\{1,2, \dots,k\}$ with $\{1,2\}\in\mathcal{T}$. Let
$\textbf{U}_t\in\mathbb{R}^{n_t\times r_t}$ denote the leaf factor matrices and
 $\mathscr{B}_\tau$  the transfer tensor associated with each internal
node $\tau\in\mathcal{T}$. Define $\widetilde{\textbf{U}}_t
=
\textbf{F}_{n_t}\textbf{U}_t
\in
\mathbb{C}^{n_t\times r_t}$ for $t=3, 4, \dots,k$.
For a fixed frequency tuple
$\boldsymbol{j}=(j_3,j_4, \dots,j_k)$, evaluate the transformed leaf
factor $\widetilde{\textbf{U}}_t$ at the row
$\widetilde{\textbf{U}}_t(j_t,:)$, while leaving
$\textbf{U}_1$ and $\textbf{U}_2$ unevaluated. Contracting all transfer tensors
and the selected leaf rows over the dimension tree, with the indices associated
with modes $1$ and $2$ left open, gives a reduced matrix $\textbf{G}_{\boldsymbol{j}}
\in
\mathbb{C}^{r_1\times r_2}.$ 
Then the Fourier-domain diagonal block corresponding to each frequency tuple $\boldsymbol{j}$ is computed as
\begin{align*}
\textbf{T}_{\boldsymbol{j}}
=
\textbf{U}_1
\textbf{G}_{\boldsymbol{j}}
\textbf{U}_2^{\top}
\in
\mathbb{C}^{n_1\times n_2}.
\end{align*}
\end{proposition}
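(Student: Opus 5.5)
The plan mirrors the argument for \cref{prop:htdbd}: first show that the separable Fourier transform acts only on the leaf factors, then evaluate at a fixed frequency tuple and pull out the two open leaves. The key fact is that an HTD tensor is a multilinear function of its leaf factor matrices. Unrolling the recursive definition from the root down, one can write
\begin{equation*}
\mathscr{T}(j_1,\dots,j_k)=\sum_{\alpha_1,\dots,\alpha_k}\mathscr{C}(\alpha_1,\dots,\alpha_k)\prod_{t=1}^{k}\textbf{U}_t(j_t,\alpha_t),
\end{equation*}
where $\mathscr{C}\in\mathbb{R}^{r_1\times\cdots\times r_k}$ is obtained by contracting all transfer tensors $\mathscr{B}_\tau$ over the internal edges of $\mathcal{T}$. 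In other words, $\mathscr{T}=\mathscr{C}\times_1\textbf{U}_1\times_2\cdots\times_k\textbf{U}_k$.

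The first step is to prove this identity by induction on the tree. At each internal node, the vector $\textbf{U}_t(:,\alpha_t)$ is a sum of Kronecker products of the children's columns, weighted by $\mathscr{B}_t$. Unfolding the Kronecker products into index tuples turns this into the product form above. I expect this bookkeeping to be the main obstacle. One has to fix consistently the ordering of the multi-index induced by the Kronecker products. One also has to check that the condition $\{1,2\}\in\mathcal{T}$ is compatible with the mode ordering; it is not actually needed for the algebra, only for efficiency.

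The second step applies the transforms. Using $\mathscr{C}\times_t\textbf{U}_t\times_t\textbf{F}_{n_t}=\mathscr{C}\times_t(\textbf{F}_{n_t}\textbf{U}_t)$, together with the fact that mode products in distinct modes commute, we get
\begin{equation*}
\widetilde{\mathscr{T}}=\mathscr{C}\times_1\textbf{U}_1\times_2\textbf{U}_2\times_3\widetilde{\textbf{U}}_3\cdots\times_k\widetilde{\textbf{U}}_k.
\end{equation*}
This is simply the HTD of $\mathscr{T}$ with the leaves $3,\dots,k$ replaced by $\widetilde{\textbf{U}}_t$, while the transfer tensors stay unchanged.

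The final step fixes $\boldsymbol{j}=(j_3,\dots,j_k)$ and defines
\begin{equation*}
\textbf{G}_{\boldsymbol{j}}(\alpha_1,\alpha_2)=\sum_{\alpha_3,\dots,\alpha_k}\mathscr{C}(\alpha_1,\dots,\alpha_k)\prod_{t=3}^k\widetilde{\textbf{U}}_t(j_t,\alpha_t).
\end{equation*}
By associativity of the finite sums, this equals the tree contraction of the transfer tensors with the selected rows $\widetilde{\textbf{U}}_t(j_t,:)$, with the leaf-1 and leaf-2 indices left open. Hence it coincides with the matrix described in the statement. Regrouping the sums then gives
\begin{equation*}
\textbf{T}_{\boldsymbol{j}}(j_1,j_2)=\sum_{\alpha_1,\alpha_2}\textbf{U}_1(j_1,\alpha_1)\textbf{G}_{\boldsymbol{j}}(\alpha_1,\alpha_2)\textbf{U}_2(j_2,\alpha_2),
\end{equation*}
that is, $\textbf{T}_{\boldsymbol{j}}=\textbf{U}_1\textbf{G}_{\boldsymbol{j}}\textbf{U}_2^\top$.
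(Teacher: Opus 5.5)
Your proof is correct and follows the same route as the paper's. In both, the mode-$t$ DFT only replaces each leaf factor $\textbf{U}_t$ by $\textbf{F}_{n_t}\textbf{U}_t$; fixing the rows $\widetilde{\textbf{U}}_t(j_t,:)$ and contracting the remaining tree then gives $\textbf{U}_1\textbf{G}_{\boldsymbol{j}}\textbf{U}_2^\top$. Your detour through the flattened core $\mathscr{C}$ just spells out the multilinearity in the leaf factors that the paper's sketch takes for granted, and you are right that $\{1,2\}\in\mathcal{T}$ plays no role in the identity itself.
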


\begin{proof}
The argument is analogous to the third-order HTD construction in \cref{prop:htdbd}, but the remaining
contractions are carried out along the dimension tree. Applying the discrete
Fourier transforms along modes $3, 4, \dots,k$ is equivalent to replacing the leaf
factor matrices $\textbf{U}_t$ by
$\widetilde{\textbf{U}}_t=\textbf{F}_{n_t}\textbf{U}_t$, $t=3, 4, \dots,k$.
For a fixed frequency tuple $\boldsymbol{j}$, selecting the rows
$\widetilde{\textbf{U}}_t(j_t,:)$ fixes all transformed modes. Contracting
the remaining HTD network gives
$\textbf{T}_{\boldsymbol{j}}
=\textbf{U}_1\textbf{G}_{\boldsymbol{j}}\textbf{U}_2^\top$.
\end{proof}


\begin{proposition}[TTD-based higher-order tensor-tensor product]
\label{thm:ttd_kth_tprod_core} Given two tensors $\mathscr{T}\in\mathbb{R}^{n_1\times l\times n_3\times\cdots\times n_k}$
and
$\mathscr{S}\in\mathbb{R}^{l\times n_2\times n_3\times\cdots\times n_k}$
in TTD format with core tensors $\mathscr{G}_t, \ \mathscr{H}_t$ and TT-ranks $\{1,r_1,\ldots,r_{k-1},1\}$, $\{1,s_1,\ldots,s_{k-1},1\}$,
respectively, define $\widetilde{\mathscr{G}}_t
=
\mathscr{G}_t\times_2\textbf{F}_{n_t}$ and $
\widetilde{\mathscr{H}}_t
=
\mathscr{H}_t\times_2\textbf{F}_{n_t}$ for 
$t=3, 4, \dots,k$.
Then the higher-order tensor-tensor product $\mathscr{T}\star_H\mathscr{S}$ admits a TTD representation
with core tensors 
\begin{align*}
\begin{cases}
    &\mathscr{P}_1(1,j_1,\phi(\alpha_1,\beta_1))
=
\mathscr{G}_1(1,j_1,\alpha_1),\\
& \mathscr{P}_2(\phi(\alpha_1,\beta_1),j_2,\phi(\alpha_2,\beta_2))
=
\displaystyle\sum_{j=1}^{l}
\mathscr{G}_2(\alpha_1,j,\alpha_2)
\mathscr{H}_1(1,j,\beta_1)
\mathscr{H}_2(\beta_1,j_2,\beta_2),\\
& \mathscr{P}_t
=
\widetilde{\mathscr{P}}_t\times_2\textbf{F}_{n_t}^{-1} \text{ for } t = 3,4, \cdots, k,
\end{cases}
\end{align*}
where $\phi(\alpha_t,\beta_t)=\alpha_t+(\beta_t-1)r_t$, and for $\alpha_t=1,2,\dots,r_t$ and $\beta_t=1,2,\dots,s_t$,
\begin{equation*}
    \widetilde{\mathscr{P}}_t
\bigl(\phi(\alpha_{t-1},\beta_{t-1}),j_t,
\phi(\alpha_t,\beta_t)\bigr)
=
\widetilde{\mathscr{G}}_t(\alpha_{t-1},j_t,\alpha_t)
\widetilde{\mathscr{H}}_t(\beta_{t-1},j_t,\beta_t),
\end{equation*}
and with TT-ranks $\{1,r_1s_1,r_2s_2,\dots,r_{k-1}s_{k-1},1\}$.

\end{proposition}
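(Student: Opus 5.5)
The plan mirrors the proof of \cref{prop:ttd_tprod}, with the third-order Fourier block structure replaced by the higher-order one from \cref{thm:ttd_kth_block_diag}. Write $\mathscr{C}=\mathscr{T}\star_H\mathscr{S}$ and $\widetilde{\mathscr{C}}=\mathscr{C}\times_3\textbf{F}_{n_3}\times_4\cdots\times_k\textbf{F}_{n_k}$. By \eqref{eq:kthorderTprod} and the invertibility of each $\textbf{F}_{n_t}$, we have $\widetilde{\mathscr{C}}=\widetilde{\mathscr{T}}\,\Delta\,\widetilde{\mathscr{S}}$. Hence, for every frequency tuple $\boldsymbol{j}=(j_3,\dots,j_k)$,
\[
\widetilde{\mathscr{C}}(j_1,j_2,\boldsymbol{j})=\sum_{j=1}^{l}\textbf{T}_{\boldsymbol{j}}(j_1,j)\,\textbf{S}_{\boldsymbol{j}}(j,j_2).
\]

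First, substitute the expression from \cref{thm:ttd_kth_block_diag} for both $\textbf{T}_{\boldsymbol{j}}$ and $\textbf{S}_{\boldsymbol{j}}$. This gives a single multiple sum over $j$, over $\alpha_1,\dots,\alpha_{k-1}$, and over $\beta_1,\dots,\beta_{k-1}$. The summand is
\[
\mathscr{G}_1\,\mathscr{G}_2\,\mathscr{H}_1\,\mathscr{H}_2\,\prod_{t=3}^{k}\widetilde{\mathscr{G}}_t(\alpha_{t-1},j_t,\alpha_t)\,\widetilde{\mathscr{H}}_t(\beta_{t-1},j_t,\beta_t),
\]
where $\alpha_k=\beta_k=1$ because the boundary ranks equal one.

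Second, regroup the factors by output index. The factor $\mathscr{G}_1(1,j_1,\alpha_1)$ depends only on $j_1,\alpha_1$. The contraction index $j$ appears only in $\mathscr{G}_2(\alpha_1,j,\alpha_2)\mathscr{H}_1(1,j,\beta_1)$, and $j_2$ appears only in $\mathscr{H}_2(\beta_1,j_2,\beta_2)$. So summing over $j$ first yields exactly the stated $\mathscr{P}_2$ as a function of $(\alpha_1,\beta_1),j_2,(\alpha_2,\beta_2)$. For each $t\ge 3$, the pair $\widetilde{\mathscr{G}}_t\widetilde{\mathscr{H}}_t$ is precisely $\widetilde{\mathscr{P}}_t$ evaluated at the combined indices.

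Third, the map $\phi(\alpha_t,\beta_t)=\alpha_t+(\beta_t-1)r_t$ is a bijection from $\{1,\dots,r_t\}\times\{1,\dots,s_t\}$ onto $\{1,\dots,r_ts_t\}$. Its boundary case $\phi(1,1)=1$ makes the last core have trailing rank one. Using this, the double sums over $(\alpha_t,\beta_t)$ become single sums over $\gamma_t=\phi(\alpha_t,\beta_t)$. The result is
\[
\widetilde{\mathscr{C}}(j_1,j_2,\boldsymbol{j})=\sum_{\gamma}\mathscr{P}_1\mathscr{P}_2\prod_{t\ge3}\widetilde{\mathscr{P}}_t,
\]
which is a TTD of $\widetilde{\mathscr{C}}$ with ranks $r_ts_t$.

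Finally, recover $\mathscr{C}=\widetilde{\mathscr{C}}\times_3\textbf{F}_{n_3}^{-1}\cdots\times_k\textbf{F}_{n_k}^{-1}$. As observed in the proof of \cref{thm:ttd_kth_block_diag}, a mode-$t$ multiplication acts only on the physical index of the $t$th core. So applying $\textbf{F}_{n_t}^{-1}$ to each $\widetilde{\mathscr{P}}_t$ along its second mode gives $\mathscr{P}_t$ and hence the claimed TTD of $\mathscr{C}$.

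The only delicate point is the bookkeeping in the regrouping step. One must check that every factor depends only on the indices assigned to its core, in particular that the open index $j_2$ sits entirely in the $\mathscr{H}_2$ factor. One must also check that the boundary indices $\alpha_k,\beta_k$ are consistent with the Kronecker-style index merging $\phi$. Everything else is routine linearity.
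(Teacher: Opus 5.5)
Your proposal is correct and follows essentially the same route as the paper. You pass to the Fourier domain, where $\star_H$ becomes a facewise product, and substitute the block formula from \cref{thm:ttd_kth_block_diag} for both factors. You then absorb the contraction over $j$ into $\mathscr{P}_2$, pair the transformed frequency cores via $\phi$, and apply $\textbf{F}_{n_t}^{-1}$ core by core. Your version also spells out the index bookkeeping that the paper leaves implicit: bijectivity of $\phi$, the boundary ranks $\alpha_k=\beta_k=1$, and commutation of the mode products.
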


\begin{proof}
The proof follows the same argument as \cref{prop:ttd_tprod}. After applying the discrete Fourier transform along modes $3, 4, \dots,k$, the higher-order tensor-tensor product reduces to matrix multiplication at each frequency tuple
$\boldsymbol{j}$. By \cref{thm:ttd_kth_block_diag}, the
Fourier-domain blocks of $\mathscr{T}$ and $\mathscr{S}$ are obtained from the
core tensors $\mathscr{G}_1,\mathscr{G}_2,\widetilde{\mathscr{G}}_t$ and
$\mathscr{H}_1,\mathscr{H}_2,\widetilde{\mathscr{H}}_t$, respectively.
Multiplying these two blocks and contracting over the shared index
$j=1,2, \dots,l$ gives the first two core tensors $\mathscr{P}_1$ and $\mathscr{P}_2$,
while the remaining frequency-mode core tensors are obtained by pairing the
corresponding entries of $\widetilde{\mathscr{G}}_t$ and
$\widetilde{\mathscr{H}}_t$, $t=3, 4, \dots,k$. Applying $\textbf{F}_{n_t}^{-1}$ to
these paired core tensors along their second indices gives the stated  core tensors.
\end{proof}


\begin{proposition}[HTD-based higher-order tensor-tensor product]
\label{thm:htd_kth_tprod_core}
Given two tensors
$\mathscr{T}\in\mathbb{R}^{n_1\times l\times n_3\times\cdots\times n_k}$
and
$\mathscr{S}\in\mathbb{R}^{l\times n_2\times n_3\times\cdots\times n_k}$
in the HTD format with respect to the same dimension tree
$\mathcal{T}$ on $D=\{1,2, \dots,k\}$ with $\{1,2\}\in\mathcal{T}$,
let $\textbf{U}_t$ and $\mathscr{G}_\tau$ and $\textbf{V}_t$ and $\mathscr{H}_\tau$
denote the leaf factor matrices and transfer tensors of $\mathscr{T}$ and
$\mathscr{S}$, respectively. Define $\phi(a,b)=a+(b-1)p$, where $a = 1,2, \dots,p$.
Let $\textbf{W}_1=\textbf{U}_1$, $\textbf{W}_2=\textbf{V}_2$, and define
$\widetilde{\textbf{U}}_t=\textbf{F}_{n_t}\textbf{U}_t$,
$\widetilde{\textbf{V}}_t=\textbf{F}_{n_t}\textbf{V}_t$,
$\widetilde{\textbf{W}}_t(j_t,\phi(\alpha_t,\beta_t))
=
\widetilde{\textbf{U}}_t(j_t,\alpha_t)
\widetilde{\textbf{V}}_t(j_t,\beta_t),
\ \text{and }
\textbf{W}_t=\textbf{F}_{n_t}^{-1}\widetilde{\textbf{W}}_t$ for
$t=3, 4, \dots,k$. At the node
$\{1,2\}$, let $\textbf{M}=\textbf{U}_2^\top\textbf{V}_1\in\mathbb{R}^{r_2\times s_1}$
and define
\begin{align*}
\mathscr{P}_{\{1,2\}}(\alpha_1,\beta_2,\phi(\gamma,\delta))
=
\sum_{\alpha_2=1}^{r_2}\sum_{\beta_1=1}^{s_1}
\mathscr{G}_{\{1,2\}}(\alpha_1,\alpha_2,\gamma)
\textbf{M}(\alpha_2,\beta_1)
\mathscr{H}_{\{1,2\}}(\beta_1,\beta_2,\delta).
\end{align*}
For any other internal node $\tau\in\mathcal{T}$ with children $\tau_1$ and
$\tau_2$, define
\begin{align*}
\mathscr{P}_\tau
\bigl(\phi(\alpha_{\tau_1},\beta_{\tau_1}),
      \phi(\alpha_{\tau_2},\beta_{\tau_2}),
      \phi(\alpha_\tau,\beta_\tau)\bigr)
=
\mathscr{G}_\tau(\alpha_{\tau_1},\alpha_{\tau_2},\alpha_\tau)
\mathscr{H}_\tau(\beta_{\tau_1},\beta_{\tau_2},\beta_\tau).
\end{align*}
Then the higher-order tensor-tensor product $\mathscr{T}\star_H\mathscr{S}$ admits an HTD representation with leaf factors $\textbf{W}_t$ and transfer tensors $\mathscr{P}_\tau$, whose hierarchical ranks
are bounded by the products of the corresponding input ranks.
\end{proposition}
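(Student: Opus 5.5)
The argument mirrors the third-order HTD/TD T-product (\cref{prop:htdtd_tprod}), with the remaining contractions carried out along the dimension tree as in \cref{thm:htd_kth_block_diag}. We work entirely in the Fourier domain. Write $\widetilde{\mathscr{C}}$ for $\mathscr{T}\star_H\mathscr{S}$ transformed along modes $3,\dots,k$.

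By the definition of $\star_H$, for each frequency tuple $\boldsymbol{j}$ we have $\widetilde{\mathscr{C}}(:,:,\boldsymbol{j})=\textbf{T}_{\boldsymbol{j}}\textbf{S}_{\boldsymbol{j}}$. \Cref{thm:htd_kth_block_diag} gives $\textbf{T}_{\boldsymbol{j}}=\textbf{U}_1\textbf{G}_{\boldsymbol{j}}\textbf{U}_2^\top$ and $\textbf{S}_{\boldsymbol{j}}=\textbf{V}_1\textbf{H}_{\boldsymbol{j}}\textbf{V}_2^\top$. Hence
\begin{equation*}
\widetilde{\mathscr{C}}(:,:,\boldsymbol{j})=\textbf{U}_1\textbf{G}_{\boldsymbol{j}}\textbf{M}\textbf{H}_{\boldsymbol{j}}\textbf{V}_2^\top,\qquad \textbf{M}=\textbf{U}_2^\top\textbf{V}_1 .
\end{equation*}
The leaf factors for modes $1$ and $2$ are therefore $\textbf{W}_1=\textbf{U}_1$ and $\textbf{W}_2=\textbf{V}_2$. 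It remains to show that the reduced matrix $\textbf{G}_{\boldsymbol{j}}\textbf{M}\textbf{H}_{\boldsymbol{j}}$ is the contraction of the proposed transfer tensors $\mathscr{P}_\tau$ with the leaf rows $\widetilde{\textbf{W}}_t(j_t,:)$.

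To see this, expand $\textbf{G}_{\boldsymbol{j}}$ along the tree. Let $\gamma$ be the index on the edge above $\{1,2\}$. Then $\textbf{G}_{\boldsymbol{j}}(\alpha_1,\alpha_2)=\sum_\gamma\mathscr{G}_{\{1,2\}}(\alpha_1,\alpha_2,\gamma)\,\textbf{g}_{\boldsymbol{j}}(\gamma)$, where $\textbf{g}_{\boldsymbol{j}}$ is the contraction of all remaining transfer tensors $\mathscr{G}_\tau$ with the rows $\widetilde{\textbf{U}}_t(j_t,:)$. Expand $\textbf{H}_{\boldsymbol{j}}$ in the same way, with edge index $\delta$ and vector $\textbf{h}_{\boldsymbol{j}}$. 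Substituting gives
\begin{equation*}
(\textbf{G}_{\boldsymbol{j}}\textbf{M}\textbf{H}_{\boldsymbol{j}})(\alpha_1,\beta_2)=\sum_{\gamma,\delta}\mathscr{P}_{\{1,2\}}(\alpha_1,\beta_2,\phi(\gamma,\delta))\,\textbf{g}_{\boldsymbol{j}}(\gamma)\textbf{h}_{\boldsymbol{j}}(\delta).
\end{equation*}

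The key step is an induction over the tree, bottom-up from the leaves, proving the following claim. For every node $\tau\notin\{\{1\},\{2\},\{1,2\}\}$, the partial contraction of the $\mathscr{P}$-network below $\tau$ equals the Kronecker (entrywise, via $\phi$) product of the corresponding partial contractions of the $\mathscr{G}$- and $\mathscr{H}$-networks.

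At the leaves this holds by the definition of $\widetilde{\textbf{W}}_t$. At an internal node it follows because $\mathscr{P}_\tau$ is the Kronecker product $\mathscr{G}_\tau\otimes\mathscr{H}_\tau$ with combined indices. Summing over the pairs $\phi(\alpha_{\tau_i},\beta_{\tau_i})$ then factorizes into separate sums over the $\alpha$ and $\beta$ indices. Applying the claim at the sibling subtrees and ancestors of $\{1,2\}$ up to the root, the full contraction of everything except $\mathscr{P}_{\{1,2\}}$ equals $\textbf{g}_{\boldsymbol{j}}(\gamma)\textbf{h}_{\boldsymbol{j}}(\delta)$ at index $\phi(\gamma,\delta)$. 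Combined with the identity above, this gives the Fourier-domain HTD of $\widetilde{\mathscr{C}}$.

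The root deserves a check, since its rank is $1$ and so $\phi(1,1)=1$. The indexing convention $\phi$ with $p$ equal to the first factor's rank must also be used consistently at every edge. Finally, applying $\textbf{F}_{n_t}^{-1}$ along mode $t$ only changes the leaf factor, giving $\textbf{W}_t=\textbf{F}_{n_t}^{-1}\widetilde{\textbf{W}}_t$, exactly as in \cref{prop:htdbd}. The rank bound is immediate: the edge ranks are $r_\tau s_\tau$ away from the $\{1,2\}$ node, and $r_1$, $s_2$ at the two open leaves.

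The main obstacle is the induction bookkeeping. The node $\{1,2\}$ breaks the Kronecker pattern, because it absorbs $\textbf{M}$ and its children carry unpaired indices. We must therefore verify that its parent edge is still a paired index $\phi(\gamma,\delta)$, so that the Kronecker claim applies to everything above and beside it. This is handled by stating the claim for contractions of complementary subtrees, rather than only for subtrees below a node.
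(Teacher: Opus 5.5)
Your proof is correct and takes the same route as the paper. You pass to the Fourier domain, write each frequency block of the product as $\textbf{U}_1\textbf{G}_{\boldsymbol{j}}\textbf{M}\textbf{H}_{\boldsymbol{j}}\textbf{V}_2^\top$, absorb $\textbf{M}$ into $\mathscr{P}_{\{1,2\}}$, pair the remaining transfer tensors and transformed leaves, and then undo the transform on the leaf factors; your Kronecker-factorization argument over the complementary subtree just spells out the pairing step that the paper asserts without detail.
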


\begin{proof}
The argument is similar to the third-order HTD construction in \cref{prop:htdtd_tprod}. After applying the discrete Fourier transform along modes $3, 4, \dots,k$, the tensor-tensor
product reduces to matrix multiplication at each frequency tuple. The transformed
leaf factors $\widetilde{\textbf{U}}_t$ and $\widetilde{\textbf{V}}_t$,
$t=3, 4, \dots,k$, therefore give the paired output leaf factors
$\widetilde{\textbf{W}}_t$. The shared dimension of size $l$ is contracted at the
node $\{1,2\}$ through $\textbf{M}=\textbf{U}_2^\top\textbf{V}_1$, which gives
the stated transfer tensor $\mathscr{P}_{\{1,2\}}$. All other internal nodes are
obtained by pairing the corresponding transfer tensors of the two input HTD
representations. Applying the inverse discrete Fourier transform to the paired leaf factors along modes
$3, 4, \dots,k$ gives the stated HTD representation.
\end{proof}


\begin{proposition}[TTD-based higher-order SVD]
\label{thm:ttd_kth_tsvd_core}
Let $\mathscr{T}\in\mathbb{R}^{n_1\times n_2\times\cdots\times n_k}$ be a tensor in the TTD format with core tensors $\mathscr{G}_t\in\mathbb{R}^{r_{t-1}\times n_t\times r_t}$, $t=1,2, \dots,k$. Define $\widetilde{\mathscr{G}}_t=\mathscr{G}_t\times_2\textbf{F}_{n_t}$ for $t=3,4,\dots,k$, and let $\textbf{G}_1=\textbf{Q}_1\textbf{R}_1$ and $\textbf{G}_2=\textbf{Q}_2\textbf{R}_2$ be the QR factorizations of the matricization $\textbf{G}_1\in\mathbb{R}^{n_1\times r_1}$ of $\mathscr{G}_1$ and the mode-2 matricization $\textbf{G}_2\in\mathbb{R}^{n_2\times r_1r_2}$ of $\mathscr{G}_2$, respectively. For each frequency tuple $\boldsymbol{\beta}=(\beta_3,\beta_4,\dots,\beta_k)$, where $\beta_t=1,2,\dots,n_t$,
\begin{align*}
\textbf{B}_{\boldsymbol{\beta}}(\alpha_1,\eta)
= \sum_{\alpha_2=1}^{r_2}\sum_{\alpha_3 = 1}^{r_3}
\cdots
\sum_{\alpha_k=1}^{r_k}
\textbf{R}_2(\eta,\alpha_1+(\alpha_2-1)r_1)
\prod_{t=3}^{k}
\widetilde{\mathscr{G}}_t(\alpha_{t-1},\beta_t,\alpha_t).
\end{align*}
Define $\textbf{M}_{\boldsymbol{\beta}}
= \textbf{R}_1\textbf{B}_{\boldsymbol{\beta}}$, $s=\max_{\boldsymbol{\beta}}\operatorname{rank}
(\textbf{M}_{\boldsymbol{\beta}}),$ and
let $\textbf{M}_{\boldsymbol{\beta}}
=
\textbf{U}_{\boldsymbol{\beta}}
\boldsymbol{S}_{\boldsymbol{\beta}}
(\textbf{V}_{\boldsymbol{\beta}})^*$
be the compact SVD padded with zero singular values if necessary. Let $\rho_t= \phi(\beta_t, \rho_{t+1}) = \beta_t+(\rho_{t+1}-1)n_t$ denote the combined frequency index and set $\rho_{k+1}=1$.
Then the TTDs of the higher-order SVD factor tensors
$\mathscr{U}$, $\mathscr{S}$, and $\mathscr{V}$ are computed as
\begin{align*}
&\mathscr{U}:\mathscr{P}_1(1,j_1,\alpha)
=
\textbf{Q}_1(j_1,\alpha),
\
\mathscr{P}_2(\alpha,\xi,\rho_3)
=
\textbf{U}_{\boldsymbol{\beta}}(\alpha,\xi),
\
\mathscr{P}_t(\rho_t, j_t, \rho_{t+1})=\textbf{F}_{n_t}^{-1}(j_t, \beta_t),\\
&\mathscr{S}: \mathscr{Q}_1(1,\xi,\mu)=\textbf{I}_s(\xi,\mu),
\
\mathscr{Q}_2(\mu,\zeta,\rho_3)
=
\boldsymbol{S}_{\boldsymbol{\beta}}(\mu,\zeta),
\
\mathscr{Q}_t(\rho_t, j_t, \rho_{t+1}) =\textbf{F}_{n_t}^{-1}(j_t, \beta_t), \\
&\mathscr{V}: \mathscr{R}_1(1,j_2,\eta)=\textbf{Q}_2(j_2,\eta),
\
\mathscr{R}_2(\eta,\zeta,\rho_3)
=
\textbf{V}_{\boldsymbol{\beta}}(\eta,\zeta),
\
\mathscr{R}_t(\rho_t, j_t, \rho_{t+1}) =\textbf{F}_{n_t}^{-1}(j_t, \beta_t),
\end{align*}
for $t = 3, 4, \dots, k$, where $\eta=1,2,\dots,\min\{n_2,r_1r_2\}$ and all unspecified entries of the transform-mode core tensors equal to zero.
\end{proposition}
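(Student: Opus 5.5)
The plan follows the third-order argument of \cref{prop:ttd_tsvd}, with the single Fourier index replaced by the frequency tuple $\boldsymbol{\beta}$. The higher-order SVD is the analogue of the T-SVD under $\star_H$. So it suffices to exhibit, for each $\boldsymbol{\beta}$, a compact SVD of the Fourier-domain block $\textbf{T}_{\boldsymbol{\beta}}$, and then to show that the proposed cores, after the inverse transforms along modes $3,\dots,k$, reproduce the stacked factor tensors.

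First, by \cref{thm:ttd_kth_block_diag},
\[
\textbf{T}_{\boldsymbol{\beta}}(j_1,j_2)=\sum_{\alpha_1,\dots,\alpha_k}\textbf{G}_1(j_1,\alpha_1)\,\mathscr{G}_2(\alpha_1,j_2,\alpha_2)\prod_{t=3}^k\widetilde{\mathscr{G}}_t(\alpha_{t-1},\beta_t,\alpha_t).
\]
Substituting the QR factorization $\mathscr{G}_2(\alpha_1,j_2,\alpha_2)=\sum_\eta \textbf{Q}_2(j_2,\eta)\textbf{R}_2(\eta,\alpha_1+(\alpha_2-1)r_1)$ and collecting the sums over $\alpha_2,\dots,\alpha_k$ gives exactly $\textbf{B}_{\boldsymbol{\beta}}$. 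Hence
\[
\textbf{T}_{\boldsymbol{\beta}}=\textbf{G}_1\textbf{B}_{\boldsymbol{\beta}}\textbf{Q}_2^\top=\textbf{Q}_1\textbf{M}_{\boldsymbol{\beta}}\textbf{Q}_2^\top .
\]
Inserting the padded SVD of $\textbf{M}_{\boldsymbol{\beta}}$ gives
\[
\textbf{T}_{\boldsymbol{\beta}}=(\textbf{Q}_1\textbf{U}_{\boldsymbol{\beta}})\,\boldsymbol{S}_{\boldsymbol{\beta}}\,(\textbf{Q}_2\textbf{V}_{\boldsymbol{\beta}})^*.
\]
Because $\textbf{Q}_2$ is real, $\textbf{Q}_2^\top=\textbf{Q}_2^*$. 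The factors $\textbf{Q}_1\textbf{U}_{\boldsymbol{\beta}}$ and $\textbf{Q}_2\textbf{V}_{\boldsymbol{\beta}}$ have orthonormal columns, so this is a valid (padded) SVD of every Fourier block. Stacking these over $\boldsymbol{\beta}$ gives the Fourier-domain tensors $\widetilde{\mathscr{U}}$, $\widetilde{\mathscr{S}}$, $\widetilde{\mathscr{V}}$. The padding to a common size $s$ makes all frontal slices the same size.

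Second, I will verify the TT structure. The inverse transforms give
\[
\mathscr{U}(j_1,\xi,j_3,\dots,j_k)=\sum_{\boldsymbol{\beta}}(\textbf{Q}_1\textbf{U}_{\boldsymbol{\beta}})(j_1,\xi)\prod_{t=3}^k\textbf{F}_{n_t}^{-1}(j_t,\beta_t).
\]
I will show that contracting the cores $\mathscr{P}_1,\dots,\mathscr{P}_k$ yields this same expression. The key bookkeeping concerns the combined indices $\rho_t=\beta_t+(\rho_{t+1}-1)n_t$. The map $(\beta_t,\dots,\beta_k)\mapsto\rho_t$ is a bijection onto $\{1,\dots,n_t\cdots n_k\}$. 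Core $\mathscr{P}_t$ has shape $(n_t\cdots n_k)\times n_t\times(n_{t+1}\cdots n_k)$ and is nonzero only when its left index $\rho_t$ is consistent with its right index $\rho_{t+1}$ through some $\beta_t$. In that case its entry is $\textbf{F}_{n_t}^{-1}(j_t,\beta_t)$. Summing the chain $\rho_3\to\rho_4\to\cdots\to\rho_{k+1}=1$ therefore visits each tuple $\boldsymbol{\beta}$ exactly once and produces the product $\prod_t\textbf{F}_{n_t}^{-1}(j_t,\beta_t)$. I will make this precise by induction from $t=k$ downward: the partial contraction of $\mathscr{P}_t,\dots,\mathscr{P}_k$ equals $\prod_{u\ge t}\textbf{F}_{n_u}^{-1}(j_u,\beta_u)$ evaluated at the tuple encoded by $\rho_t$. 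Combined with $\sum_\alpha\mathscr{P}_1(1,j_1,\alpha)\mathscr{P}_2(\alpha,\xi,\rho_3)=(\textbf{Q}_1\textbf{U}_{\boldsymbol{\beta}})(j_1,\xi)$, this recovers $\mathscr{U}$. The cases of $\mathscr{S}$ (with $\mathscr{Q}_1=\textbf{I}_s$) and $\mathscr{V}$ (with $\textbf{Q}_2$) are identical.

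The main obstacle is this index bookkeeping. One must ensure that the zero-pattern of the transform-mode cores enforces consistency of $\rho_t$ and $\rho_{t+1}$, that is, $\rho_t=\phi(\beta_t,\rho_{t+1})$. Otherwise spurious cross terms between different frequency tuples would appear. I will handle this by writing $\mathscr{P}_t(\rho_t,j_t,\rho_{t+1})=\sum_{\beta_t}\delta_{\rho_t,\phi(\beta_t,\rho_{t+1})}\textbf{F}_{n_t}^{-1}(j_t,\beta_t)$, so that the Kronecker deltas telescope. The remaining claims, such as the complex-valuedness and the real-valued equivalent via conjugate pairs, follow as in the third-order discussion.
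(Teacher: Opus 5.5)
Your proposal is correct and takes the same route as the paper. You reduce each Fourier block via \cref{thm:ttd_kth_block_diag} and the two QR factorizations to $\textbf{Q}_1\textbf{M}_{\boldsymbol{\beta}}\textbf{Q}_2^\top$, lift the padded SVD of $\textbf{M}_{\boldsymbol{\beta}}$ to an SVD of the block, and let the transform-mode cores apply the inverse DFTs; your Kronecker-delta and downward-induction argument for the combined indices $\rho_t$ simply makes explicit the bookkeeping that the paper states in one sentence.
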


\begin{proof}
The proof follows the same argument as \cref{prop:ttd_tsvd}.
For each frequency tuple $\boldsymbol{\beta}$, \cref{thm:ttd_kth_block_diag}
and the QR factorization of the mode-2 matricization of $\mathscr{G}_2$ give
$\widetilde{\mathscr{T}}(:,:, \boldsymbol{\beta})
=
\textbf{G}_1\textbf{B}_{\boldsymbol{\beta}}\textbf{Q}_2^\top.$
Using $\textbf{G}_1=\textbf{Q}_1\textbf{R}_1$, we have $\widetilde{\mathscr{T}}(:,:, \boldsymbol{\beta})
=
\textbf{Q}_1\textbf{M}_{\boldsymbol{\beta}}\textbf{Q}_2^\top.$
Taking the SVD of $\textbf{M}_{\boldsymbol{\beta}}$ gives $\widetilde{\mathscr{T}}(:,:, \boldsymbol{\beta})
=
(\textbf{Q}_1\textbf{U}_{\boldsymbol{\beta}})
\boldsymbol{S}_{\boldsymbol{\beta}}
(\textbf{Q}_2\textbf{V}_{\boldsymbol{\beta}})^*.$
Thus, the Fourier-domain higher-order SVD factors are obtained from
$\textbf{Q}_1\textbf{U}_{\boldsymbol{\beta}}$,
$\boldsymbol{S}_{\boldsymbol{\beta}}$, and
$\textbf{Q}_2\textbf{V}_{\boldsymbol{\beta}}$ for all frequency tuples. The stated TT core tensors collect these factors over the combined frequency indices $\rho_t$, while the core tensors $\mathscr{P}_t$, $\mathscr{Q}_t$, and $\mathscr{R}_t$, $t=3,4,\dots,k$, apply the inverse Fourier transforms along modes $3, 4, \dots,k$. Hence, they define TTD representations of the higher-order SVD factor tensors $\mathscr{U}$, $\mathscr{S}$, and $\mathscr{V}$.
\end{proof}

\begin{proposition}[HTD-based higher-order SVD]
\label{thm:htd_kth_tsvd_core}
Let $\mathscr{T}\in\mathbb{R}^{n_1\times n_2\times n_3\times\cdots\times n_k}$ be a tensor in the HTD format with leaf factor matrices $\textbf{U}_t$ and
transfer tensors $\mathscr{B}_\tau$ with respect to a dimension tree
$\mathcal{T}$ on $D=\{1,2, \dots,k\}$, where $\textbf{U}_1$ and
$\textbf{U}_2$ have orthonormal columns. Assume that $\{1,2\}\in\mathcal{T}$ and that the modes $3, 4, \dots,k$ form the complementary subtree. For $t=3, 4, \dots,k$, define
$\widetilde{\textbf{U}}_t=\textbf{F}_{n_t}\textbf{U}_t$. For each frequency tuple
$\boldsymbol{j}$, let
$\textbf{G}_{\boldsymbol{j}}\in\mathbb{C}^{r_1\times r_2}$ be obtained by
contracting the transformed HTD network with the rows
$\widetilde{\textbf{U}}_t(j_t,:)$, while leaving the rank
indices associated with modes $1$ and $2$ open. Set
$s=\max_{\boldsymbol{j}}\operatorname{rank}(\textbf{G}_{\boldsymbol{j}})$ and let $\textbf{G}_{\boldsymbol{j}}
=
\textbf{U}_{\boldsymbol{j}}
\boldsymbol{S}_{\boldsymbol{j}}
(\textbf{V}_{\boldsymbol{j}})^*$
be the compact SVD, padded with zero singular values if necessary.
Let $\mathcal{T}_{\mathscr{U}}$, $\mathcal{T}_{\mathscr{S}}$, and
$\mathcal{T}_{\mathscr{V}}$ be the HTD trees whose root $D$ has children $\{1,2\}$ and $f=\{3,4,\dots,k\}$, and which share the same subtree $\mathcal{T}_f$ on $f$. The leaf factors of $\mathscr{U}$, $\mathscr{S}$, and
$\mathscr{V}$ are computed as
\begin{align*}
\begin{array}{lll}
\textbf{L}^{\mathscr{U}}_{\{1\}}=\textbf{U}_1, &
\textbf{L}^{\mathscr{U}}_{\{2\}}=\textbf{I}_s, &
\textbf{L}^{\mathscr{U}}_{\{t\}}=\textbf{F}_{n_t}^{-1}, \\
\textbf{L}^{\mathscr{S}}_{\{1\}}=\textbf{I}_s, &
\textbf{L}^{\mathscr{S}}_{\{2\}}=\textbf{I}_s, &
\textbf{L}^{\mathscr{S}}_{\{t\}}=\textbf{F}_{n_t}^{-1}, \\
\textbf{L}^{\mathscr{V}}_{\{1\}}=\textbf{U}_2, &
\textbf{L}^{\mathscr{V}}_{\{2\}}=\textbf{I}_s, &
\textbf{L}^{\mathscr{V}}_{\{t\}}=\textbf{F}_{n_t}^{-1},
\end{array}
\qquad t=3, 4, \dots,k.
\end{align*}
Let $\phi(a,b)=a+(b-1)p$, $a = 1,2,\dots,p$. The transfer tensors at the node $\{1,2\}$ are
\begin{align*}
\mathscr{B}^{\mathscr{U}}_{\{1,2\}}(\alpha_1,\xi,\phi(\alpha_1,\xi))=1,\
\mathscr{B}^{\mathscr{S}}_{\{1,2\}}(\xi,\zeta,\phi(\xi,\zeta))=1,\
\mathscr{B}^{\mathscr{V}}_{\{1,2\}}(\alpha_2,\zeta,\phi(\alpha_2,\zeta))=1,
\end{align*}
For each leaf node $\{t\}\subseteq f$, let
$\rho_{\{t\}}=j_t$. For each internal node
$\tau\subseteq f$ with children $\tau_1$ and $\tau_2$, define $\rho_\tau = \phi(\rho_{\tau_1},\rho_{\tau_2}) = \rho_{\tau_1} + (\prod_{\ell\in\tau_1}n_\ell) (\rho_{\tau_2}-1).$
Therefore, 
the transfer tensors at $\tau$ satisfy
$
\mathscr{B}^{\mathscr{X}}_{\tau}
(\rho_{\tau_1},
\rho_{\tau_2},
\rho_\tau)=1$,
 where $\mathscr{X}\in\{\mathscr{U},\mathscr{S},\mathscr{V}\}$.
Finally, the 
root transfer tensors are computed as
\begin{align*}
&\mathscr{B}^{\mathscr{U}}_{D}(\phi(\alpha_1,\xi),\rho_f,1)
=
\textbf{U}_{\boldsymbol{j}}(\alpha_1,\xi),\quad
\mathscr{B}^{\mathscr{S}}_{D}(\phi(\xi,\zeta),\rho_f,1)
=
\boldsymbol{S}_{\boldsymbol{j}}(\xi,\zeta),\\
&\mathscr{B}^{\mathscr{V}}_{D}(\phi(\alpha_2,\zeta),\rho_f,1)
=
\textbf{V}_{\boldsymbol{j}}(\alpha_2,\zeta).
\end{align*}
These factor matrices and transfer tensors define HTD representations of the higher-order SVD factor tensors $\mathscr{U}$, $\mathscr{S}$, and $\mathscr{V}$, with all unspecified entries of the transfer tensors equal to zero.
\end{proposition}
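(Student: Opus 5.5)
Following the pattern of \cref{prop:htd_tsvd} and \cref{thm:ttd_kth_tsvd_core}, the proof has two halves: first produce a valid SVD of every Fourier-domain block from the reduced matrices $\textbf{G}_{\boldsymbol{j}}$, then check that the stated leaves and transfer tensors contract to the tensors holding these SVD factors, followed by the inverse transforms along modes $3,\dots,k$.

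\emph{Step 1 (Fourier-domain SVD).} By \cref{thm:htd_kth_block_diag}, each block satisfies $\textbf{T}_{\boldsymbol{j}}=\textbf{U}_1\textbf{G}_{\boldsymbol{j}}\textbf{U}_2^\top$. Substituting the padded compact SVD of $\textbf{G}_{\boldsymbol{j}}$ gives $\textbf{T}_{\boldsymbol{j}}=(\textbf{U}_1\textbf{U}_{\boldsymbol{j}})\boldsymbol{S}_{\boldsymbol{j}}(\textbf{U}_2\textbf{V}_{\boldsymbol{j}})^*$. Since $\textbf{U}_1$ and $\textbf{U}_2$ have orthonormal columns, so do the two outer factors, and this is a valid SVD of $\textbf{T}_{\boldsymbol{j}}$. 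Padding with zero singular values keeps every block at the common size $s$. Hence, by the definition of the higher-order SVD, the Fourier-domain frontal slices of $\mathscr{U},\mathscr{S},\mathscr{V}$ are $\textbf{U}_1\textbf{U}_{\boldsymbol{j}}$, $\boldsymbol{S}_{\boldsymbol{j}}$ and $\textbf{U}_2\textbf{V}_{\boldsymbol{j}}$.

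\emph{Step 2 (the frequency subtree).} Consider the subtree $\mathcal{T}_f$. Its transfer tensors are $0/1$ tensors with $\mathscr{B}_\tau(\rho_{\tau_1},\rho_{\tau_2},\rho_\tau)=1$, and $\rho_\tau=\phi(\rho_{\tau_1},\rho_{\tau_2})$ is a bijection between pairs of child indices and the combined index. I will show by induction from the leaves up that the node matrix at $\tau$ has column $\rho_\tau$ equal to $\bigotimes_{t\in\tau}\textbf{F}_{n_t}^{-1}(:,j_t)$, where $(j_t)_{t\in\tau}$ is the tuple encoded by $\rho_\tau$. The induction uses the HTD recursion $\textbf{U}_\tau(:,\alpha)=\sum\mathscr{B}_\tau(\alpha_1,\alpha_2,\alpha)\textbf{U}_{\tau_1}(:,\alpha_1)\otimes\textbf{U}_{\tau_2}(:,\alpha_2)$. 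Because $\mathscr{B}_\tau$ is $0/1$ and $\phi$ is a bijection, each sum collapses to a single Kronecker product. At the root of $\mathcal{T}_f$, the column $\rho_f$ therefore corresponds exactly to the frequency tuple $\boldsymbol{j}$ and carries the inverse-DFT vectors along modes $3,\dots,k$.

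\emph{Step 3 (the $\{1,2\}$ node and the root).} The node $\{1,2\}$ works the same way. Take $\mathscr{U}$: its transfer tensor pairs the leaf columns $\textbf{U}_1(:,\alpha_1)\otimes\textbf{I}_s(:,\xi)$ into column $\phi(\alpha_1,\xi)$. The root tensor $\mathscr{B}^{\mathscr{U}}_D(\phi(\alpha_1,\xi),\rho_f,1)=\textbf{U}_{\boldsymbol{j}}(\alpha_1,\xi)$ then contracts everything to
\[
\mathscr{U}(j_1,\xi,j_3,\dots,j_k)=\sum_{\boldsymbol{j}'}\Big(\sum_{\alpha_1}\textbf{U}_1(j_1,\alpha_1)\textbf{U}_{\boldsymbol{j}'}(\alpha_1,\xi)\Big)\prod_{t=3}^k\textbf{F}_{n_t}^{-1}(j_t,j'_t).
\]
This is $(\textbf{U}_1\textbf{U}_{\boldsymbol{j}'})$ stacked over $\boldsymbol{j}'$ and transformed by $\times_t\textbf{F}_{n_t}^{-1}$ for $t=3,\dots,k$, which by Step 1 is exactly $\mathscr{U}$. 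The arguments for $\mathscr{S}$ (leaves $\textbf{I}_s,\textbf{I}_s$) and $\mathscr{V}$ (leaf $\textbf{U}_2$ and $\textbf{V}_{\boldsymbol{j}}$) are identical.

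The main obstacle is the index bookkeeping in Step 2: checking that the recursively defined combined index $\rho_f$ matches the ordering of the Kronecker products, so that the root column indexed by $\rho_f$ is the correct $\boldsymbol{j}$. I will handle this by fixing a single ordering convention for the Kronecker product and for the HTD vectorization, consistent with the definition of $\phi$, and then running the induction over $\mathcal{T}_f$. A minor point to note is that the leaf at mode $2$ has $s$ rows, not the original size. This is the intended shape of $\mathscr{U}\in\mathbb{C}^{n_1\times s\times n_3\times\cdots\times n_k}$ in the compact SVD.
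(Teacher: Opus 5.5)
Your proposal is correct and follows essentially the same route as the paper's proof. Both obtain $\textbf{T}_{\boldsymbol{j}}=(\textbf{U}_1\textbf{U}_{\boldsymbol{j}})\boldsymbol{S}_{\boldsymbol{j}}(\textbf{U}_2\textbf{V}_{\boldsymbol{j}})^*$ from \cref{thm:htd_kth_block_diag} and then check three things: the subtree $\mathcal{T}_f$ encodes the frequency tuple and applies the $\textbf{F}_{n_t}^{-1}$, the node $\{1,2\}$ pairs the rank and singular-vector indices, and the root stores the SVD factors; your leaf-to-root induction over $\mathcal{T}_f$ and the explicit contraction formula for $\mathscr{U}$ make precise what the paper states only in words.
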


\begin{proof}
The argument is similar to the third-order HTD construction in \cref{prop:htd_tsvd}. For each $\boldsymbol{j}$, \cref{thm:htd_kth_block_diag} gives $\widetilde{\mathscr{T}}(:,:, \boldsymbol{j})
=
\textbf{U}_1
\textbf{G}_{\boldsymbol{j}}
\textbf{U}_2^\top.$
Substituting the compact SVD of $\textbf{G}_{\boldsymbol{j}}$ yields $\widetilde{\mathscr{T}}(:,:, \boldsymbol{j})
=
(\textbf{U}_1\textbf{U}_{\boldsymbol{j}})
\boldsymbol{S}_{\boldsymbol{j}}
(\textbf{U}_2\textbf{V}_{\boldsymbol{j}})^* .$
Thus, the Fourier-domain higher-order SVD factors are
$\textbf{U}_1\textbf{U}_{\boldsymbol{j}}$,
$\boldsymbol{S}_{\boldsymbol{j}}$, and
$\textbf{U}_2\textbf{V}_{\boldsymbol{j}}$. The proposed HTD leaf factors
and transfer tensors reproduce these factors at each frequency tuple: the node
$\{1,2\}$ pairs the rank and singular-vector indices, the subtree $\mathcal{T}_f$ combines the frequency indices $j_3,j_4, \dots,j_k$ and applies the inverse discrete Fourier transforms through the leaf factors $\textbf{F}_{n_t}^{-1}$, and the root transfer tensor stores
$\textbf{U}_{\boldsymbol{j}}$,
$\boldsymbol{S}_{\boldsymbol{j}}$, or
$\textbf{V}_{\boldsymbol{j}}$. Since the transform-mode leaf factors are
$\textbf{F}_{n_t}^{-1}$, $t=3, 4, \dots,k$, the HTD contraction applies the inverse
discrete Fourier transforms along all transform modes. Therefore, the constructed HTD
representations give $\mathscr{U}$, $\mathscr{S}$, and $\mathscr{V}$ satisfying $\mathscr{T}=\mathscr{U}\star_H\mathscr{S}\star_H\mathscr{V}^{\top}.$
\end{proof}

\begin{remark}
Assume $n_1=n_2=\cdots=n_k=n$. Let $r$ and $s$ denote the maximum TT-ranks or hierarchical ranks. For the TTD-based framework, the computational complexities are $\mathcal{O}((k-2)r^2n\log n+n^{k-2}((k-2)r^2+nr^2 + rn^2))$ for higher-order block diagonalization, $\mathcal{O}(r^2sl + r^2s^2n+(k-2)n((r^2+s^2)\log n+r^2s^2(1+\log n)))$ for the higher-order tensor-tensor product, and
$\mathcal{O}((k-2)r^2n\log n + nr^2 + nr^2q + n^{k-2}((k-2)r^2 + r^2q + \min \{r, q\}^2\max\{r, q\}))$ where $q \leq \min\{r^2, n_2\}$ for higher-order SVD.
For the HTD-based framework, the corresponding computational complexities are $\mathcal{O}((k-2)nr\log n+n^{k-2}(kr^3+r^2n+rn^2))$ for higher-order block diagonalization, $\mathcal{O}((k-2)n(r+s+rs)\log n+lrs+kr^3s^3)$ for the higher-order tensor-tensor product, and $\mathcal{O}((k-2)nr\log n+n^{k-2}(kr^3+r^3))$ for  higher-order SVD.
In contrast, the definition-based approach explicitly forms the full tensors and performs computations on all $n^{k-2}$ block-diagonal matrices in the transform domain. Consequently, its computational complexities are $\mathcal{O}(n^k(k-2)\log n)$ for higher-order block diagonalization, $\mathcal{O}(n^{k-2}((2ln+n^2)(k-2)\log n+ln^2))$ for the higher-order tensor-tensor product, and $\mathcal{O}(n^k(k-2)\log n+n^{k-2}n^3)$ for  higher-order SVD.
\end{remark}


By formulating higher-order block diagonalization, the tensor-tensor product, and higher-order SVD directly in terms of the TTD and HTD factors, the proposed algorithms avoid constructing full tensors and their transform-domain representations while preserving the underlying low-rank decomposition structure. This reduces storage requirements and computational cost when the decomposition ranks are small relative to the tensor dimensions, providing an efficient framework for higher-order transform-based multilinear algebra.

\section{Numerical Examples} \label{sec:numeg}
We evaluated the proposed framework through a series of numerical experiments. All computations were performed on a university high-performance computing cluster, with each experiment allocated 200 GB of memory on a compute node. The source code for the experiments is available at \url{https://github.com/usernamemydusername/decomp_based_tensoralg}.

\subsection{Third-order T-product}

We evaluated the computational efficiency of the proposed TTD- and HTD-based T-product using randomly generated third-order tensors $\mathscr{T}\in\mathbb{R}^{n_1\times l\times n_3}$ and $\mathscr{S}\in\mathbb{R}^{l\times n_2\times n_3}$ under three data generation strategies: (a) random sparse tensors, (b) tensors with low TT-ranks, and (c) tensors with low hierarchical ranks. We excluded the computational time for constructing TTD and HTD formats. Throughout the experiments, we fixed $n_3=6$ and varied $n_1$, $n_2$, and $l$ from $2^3$ to $2^{13}$. For the low-rank cases, the corresponding TTD or HTD factors are provided as inputs. We compared the following three implementations:  
(i) the definition-based T-product; 
(ii) the proposed TTD-based T-product according to \cref{prop:ttd_tprod}; and (iii) the proposed HTD-based T-product framework according to \cref{prop:htdtd_tprod}. For each problem size, the experiment was repeated five times, and the average runtime was recorded. The results in \cref{fig:ttd_tprod} show that the proposed TTD- and HTD-based methods consistently outperform the definition-based implementation across all three data generation strategies. The performance advantage becomes more pronounced as the tensor dimensions increase, especially for tensors with low TT-rank or hierarchical rank, where computations are performed directly on compressed representations. The HTD-based method is generally faster than the TTD-based method across all three data generation strategies because it first contracts the shared physical mode in the reduced representation, whereas the TTD-based construction multiplies the TT-ranks in the corresponding output core.
\begin{figure}[t]
    \centering
    \includegraphics[width=1\linewidth]{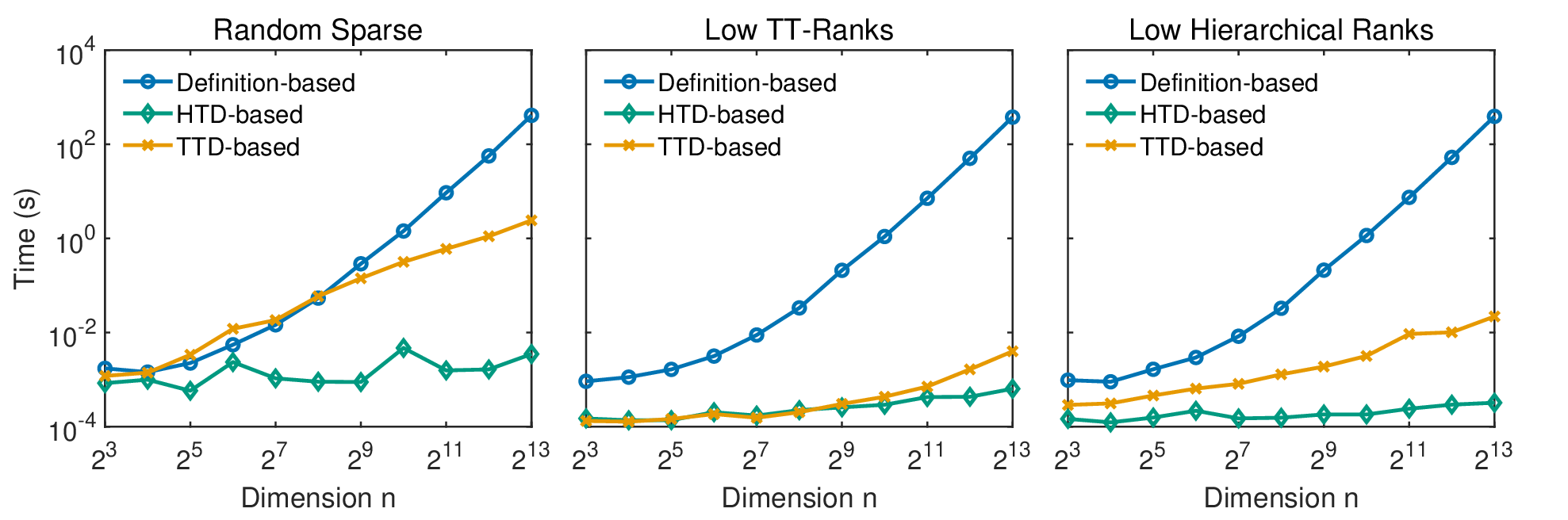}
     \vspace{-20pt}
    \caption{Log-log plots of the average runtime for the definition-based, TTD-based, and HTD-based T-product under three data generation strategies: random sparse tensors,  random tensors with low TT-rank, and random tensors with low hierarchical rank.}
   
    \label{fig:ttd_tprod}
\end{figure}

\subsection{Fourth-order SVD} \label{sec:num_highorder_tsvd}

We evaluated the computational efficiency of the proposed TTD- and HTD-based frameworks for computing the fourth-order SVD. We considered fourth-order tensors $\mathscr{T}\in\mathbb{R}^{n_1\times n_2\times n_3\times n_4}$ generated using the same three data generation strategies as in the previous experiment. The spatial dimensions were varied as $n_1=n_2=2^p$, $p=3,4,\dots,13$, while the transform-mode dimensions were fixed at $n_3=n_4=4$. We compared three implementations: (i) the definition-based higher-order SVD; (ii) the proposed TTD-based higher-order SVD according to \cref{thm:ttd_kth_tsvd_core}; and (iii) the proposed HTD-based higher-order SVD according to \cref{thm:htd_kth_tsvd_core}. For each problem size, the experiment was repeated five times, and the average runtime was recorded. As shown in \cref{fig:ttd_tsvd}, both the proposed TTD- and HTD-based methods consistently outperform the definition-based implementation across all three data generation strategies. The performance advantage becomes more pronounced as the tensor dimensions increase, particularly when the tensors admit low TT-rank or HTD representations. Moreover, in additional tests beyond the range shown in \cref{fig:ttd_tsvd}, we found that the definition-based method failed to complete the test cases for $p\geq 15$ due to its substantially higher memory and computational requirements, whereas the proposed methods remained computationally feasible.

\begin{figure}[t]
    \centering
    \includegraphics[width=1\linewidth]{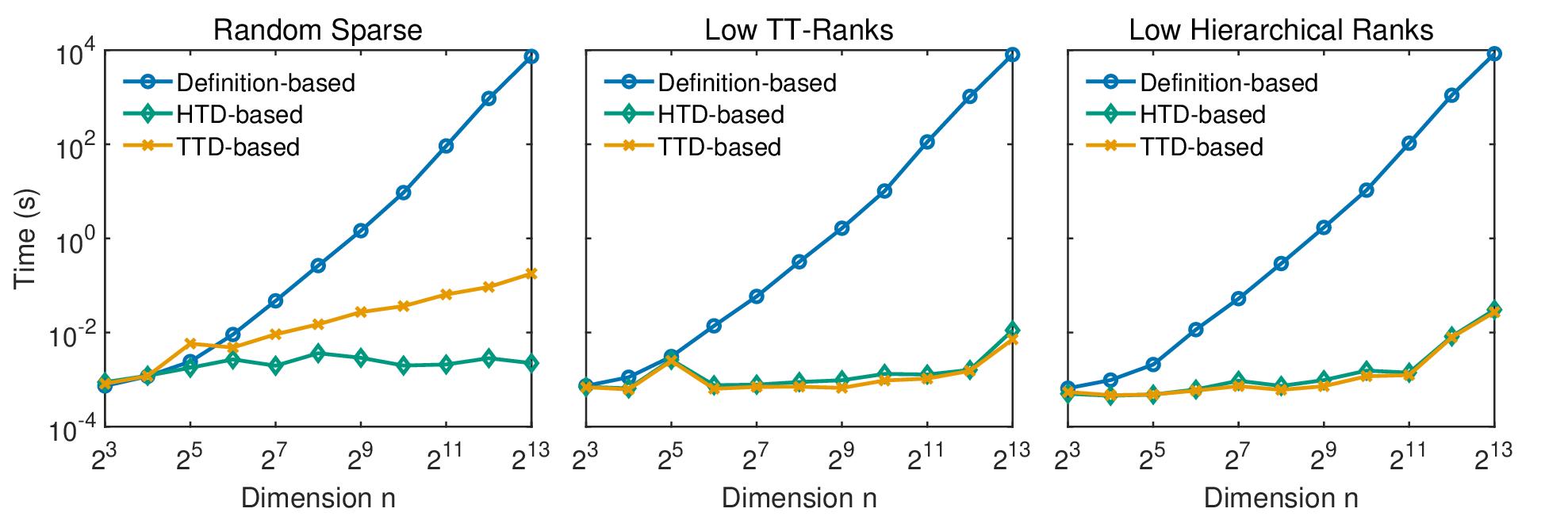}
    \vspace{-20pt}
    \caption{Log-log plots of the average runtime for the definition-based, TTD-based, and HTD-based higher-order SVD methods under three data generation strategies: random sparse tensors, random tensors with low TT-rank, and random tensors with low hierarchical rank.}
    \label{fig:ttd_tsvd}
\end{figure}

\subsection{T-eigensystem realization algorithm}
The proposed framework is particularly beneficial for large-scale applications in which the T-SVD constitutes the dominant computational kernel. As an example, we considered the tensor eigensystem realization algorithm (T-ERA) \cite{mei2026data}, which identifies reduced-order multilinear systems from measured data using the T-product and T-SVD. T-ERA generalizes the classical eigensystem realization algorithm (ERA) \cite{Juang1985era} to multilinear systems through transform-based tensor algebra. Since the computational cost of T-ERA is dominated by the T-SVD of a large generalized Hankel tensor, it provides a natural benchmark for evaluating the proposed framework. 

We replaced the definition-based T-SVD in T-ERA with the proposed TTD- and HTD-based formulations and compared their computational performance. 
Specifically, we benchmarked T-ERA using synthetic generalized Hankel tensors with one of three prescribed structures: (a) random sparse, (b) low TT-rank, or (c) low hierarchical rank, following the framework in \cite{mei2026data}, and the Hankel dimensions satisfy $\ell(L+1)=m(T+1)=H$, where we fixed $H=10,000$. We compared three implementations of T-ERA that differ only in the computation of the Hankel T-SVD: (i) the baseline T-ERA using the definition-based T-SVD; (ii) T-ERA with the TTD-based T-SVD; and (iii) T-ERA with the HTD-based T-SVD. For each configuration, the experiment was repeated ten times, and the average runtime for reduced model construction was recorded. As shown in \cref{fig:tera}, replacing the definition-based Hankel T-SVD with the proposed TTD- or HTD-based formulations substantially reduces the overall runtime of T-ERA across all three data generation strategies. 
The relative $\mathcal{H}_\infty$ errors between the reduced systems generated by the TTD-/HTD-based methods and the definition-based method remain below $2\times10^{-14}$ for all three test cases (see \cref{tab:tera_relative_errors}). These results confirm that the proposed T-SVD methods effectively alleviate the dominant computational bottleneck in large-scale T-ERA.

\begin{figure}[t]
    \centering
    \includegraphics[width=1\linewidth]{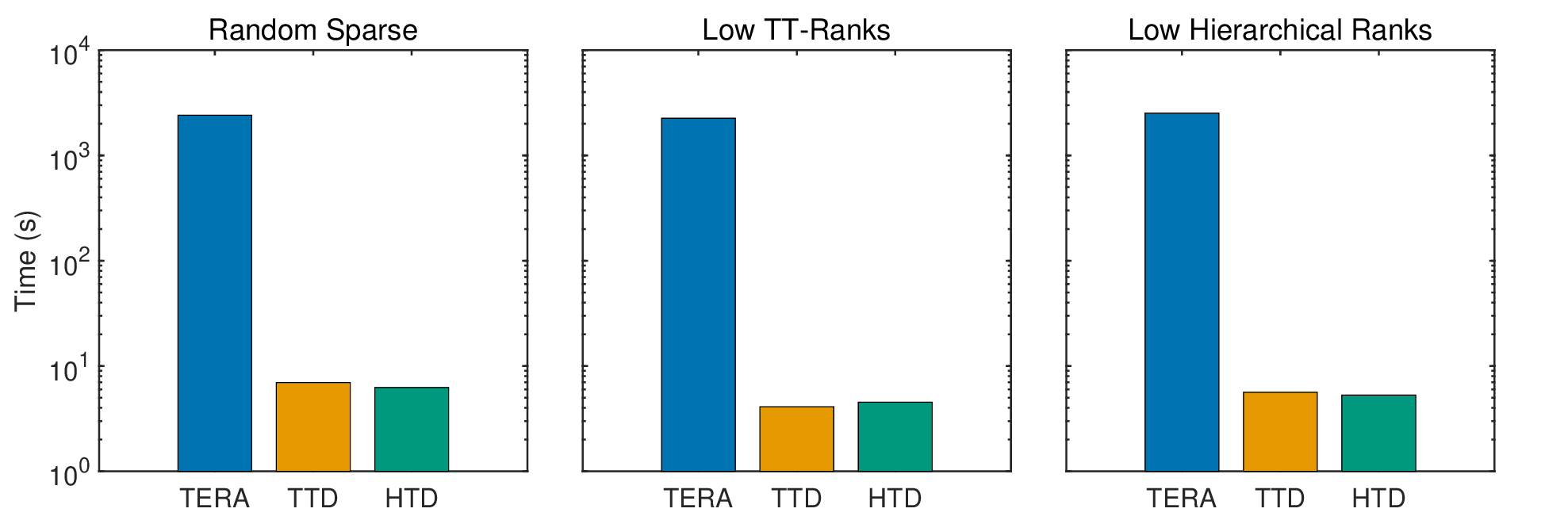}
    \vspace{-20pt}
    \caption{Log plots of the average runtime of the baseline, TTD-based, and HTD-based T-ERA  for generalized Hankel tensors with random sparse,  low TT-rank, and low hierarchical rank structures.}
    \label{fig:tera}
\end{figure}

\begin{table}[htbp]
\centering
\caption{Relative $\mathcal{H}_\infty$ errors of the reduced systems obtained by the TTD- and HTD-based T-ERA with respect to the definition-based T-ERA model for the three test cases.}
\label{tab:tera_relative_errors}
\resizebox{0.6\textwidth}{!}{
\begin{tabular}{lcc}
\toprule
Case
& TTD-based
& HTD-based \\
\midrule
Random sparse
& $5.77\times10^{-15}$
& $ 5.97\times10^{-15}$ \\
Low TT-ranks
& $9.05\times10^{-15}$
& $9.16\times10^{-15}$ \\
Low hierarchical ranks
& $1.67\times10^{-14}$
& $1.58\times10^{-14}$ \\
\bottomrule
\end{tabular}}
\end{table}

\section{Conclusion} \label{sec:conclusion}


In this article, we developed decomposition-based computational frameworks for transform-based multilinear algebra using TTD and HTD. They formulate block diagonalization, the T-product, and T-SVD directly in compressed representations, so that the computations are performed on TTD core tensors or HTD factors without full-tensor reconstruction. We further extended the frameworks to higher-order tensors and demonstrated their effectiveness numerically, including in multilinear system model reduction. The results show that the proposed TTD-based and HTD-based methods can substantially reduce computational cost while preserving the operator-based structure of transform tensor algebra, especially when the tensors have low TT- or hierarchical ranks.

Several directions merit further investigation. The proposed decomposition-based framework can be extended to other transform-based tensor operations, including T-QR, tensor inverse computations, and additional tensor factorizations. It would also be valuable to move beyond the discrete Fourier transform by incorporating more general invertible transforms together with a broader class of tensor decompositions, enabling the computational representation to better adapt to the underlying data and operators. This includes the development of transform-dependent compressed formulations as well as practical strategies for selecting appropriate decomposition formats for different applications. More broadly, the proposed framework can be integrated into computational pipelines in which transform-based tensor operations arise repeatedly as fundamental computational kernels, including multilinear system identification and model reduction, multidimensional imaging and video analysis, hyperspectral data processing, parametric partial differential equations and uncertainty quantification, and scientific machine learning. We expect that these directions will further expand the scope of scalable transform-based multilinear algebra and its applications to large-scale tensor computation.

\bibliographystyle{siamplain}
\bibliography{references}
\end{document}